\newtheorem{thm}{Theorem}[section]
\newtheorem{lem}[thm]{Lemma}
\newtheorem{cor}[thm]{Corollary}
\newtheorem{prop}[thm]{Proposition}
\newtheorem{rem}[thm]{Remark}
\newtheorem{rmk}[thm]{Remark}
\theoremstyle{definition}
\newcommand{\BD}[1]{\mathbf{#1}}
\newcommand{\norm}[1]{\left\lVert#1\right\rVert}
\newcommand{\N}{\mathbb{N}}
\newcommand{\R}{\mathbb{R}}
\newcommand{\Z}{\mathbb{Z}}
\newcommand{\Q}{\mathbb{Q}}
\newcommand{\MM}{\mathcal{M}}
\newcommand{\MY}{\mathcal{Y}}
\makeatletter\@addtoreset{equation}{section}\makeatother
\titleformat{\section}{\centering\normalsize}{\textsc{\thesection.}}{0.5em}{\textsc}
\titleformat{\subsection}[runin]{\normalsize}{\textbf{\thesubsection.}}{0.3em}{\textbf}
\begin{document}
\begin{CJK*}{GBK}{song}
\author{Wen-Long Li \\ School of Mathematics\\ Sun Yat-Sen University, Guangzhou, 510275, P. R. China\\ 
Xiaojun Cui\\ Department of Mathematics\\ Nanjing University, Nanjing, 210093, P. R. China}
\email{liwenlongchn@gmail.com}

\email{xcui@nju.edu.cn}

\title{Multitransition solutions for a generalized Frenkel-Kontorova model}

\begin{abstract}
We study a generalized Frenkel-Kontorova model.
Using minimal and Birkhoff solutions as building blocks,
we construct a lot of homoclinic solutions and heteroclinic solutions for this generalized Frenkel-Kontorova model under gap conditions.
These new solutions are not minimal and Birkhoff any more.
We use constrained minimization method to prove our results.
\end{abstract}

\subjclass[2010]{35A15, 37K60, 74G25, 74G35}

\keywords{Multitransition solutions; Moser-Bangert theory; Frenkel-Kontorova model; constrained minimization method}

\date{September 10, 2019}

\maketitle


\section{Introduction}\label{sec:intro}

In recent years, a generalized $n$-dimensional (or $n$-D for short, with $n\geq 2$) Frenkel-Kontarova (or FK for short) model has been extensively studied (cf. e.g., \cite{dela2, MR2012JDE, Miao, LC}).
In particular, part of the results of Moser-Bangert theory has been established for this model (cf. \cite{MR2012JDE, Miao, LC}, see also \cite{dela1}).
By Moser-Bangert theory, following Rabinowitz and Stredulinsky (\cite{RS}),
we mean an elegant theory initialed by Moser (\cite{Moser}), and extended by Bangert (\cite{bangert1987, bangertcmh, Bangert}),
and generalized by Rabinowitz and Stredulinsky (\cite{RS3, RS4, RS5, RS6, RS2003, RS2004, RS}) and other researchers.
Moser-Bangert theory also has many applications, cf. \cite{caff, caff2001, val, tor, bes, llavevaldi, cozzi} and the references therein.
Moser and Bangert considered a variational problem
and studied the minimal and without self-intersections solutions of this problem.
They clarified the structure of the set of these solutions.
Rabinowitz and Stredulinsky studied an Allen-Cahn type equation, which belongs to the classes of the variational problem of Moser and Bangert.
Rabinowitz and Stredulinsky obtained a lot of homoclinic and heteroclinic solutions of the Allen-Cahn type equation.
Note that although Rabinowitz and Stredulinsky studied a special class of the variational problem of Moser and Bangert,
their methods and results hold for more generalized equations besides their Allen-Cahn type equation.

In this paper, we use variational method
to obtain more homoclinic and heteroclinic solutions of a generalized $n$-D FK model.
The construction of these new solutions are based on minimal and Birkhoff solutions of \cite{LC}.
The method of this paper follows \cite{RS}.
This method is also used in dynamical systems by Mather (\cite{Mather1993}) for constructing heteroclinic orbits.
We recall some definitions and results of the generalized FK model considered in \cite{LC}.

\subsection{Generalized FK model.}
\

The generalized FK model considered in this paper is a problem on the lattice $\Z^n$.
To introduce this problem, we recall some notations.
A configuration is a function $u: \Z^n \to \R$ and we denote such a function by $u\in\R^{\Z^n}$. Similarly we can define $u\in\R^{E}$ for any $E\subset \Z^n$.
We use $\BD{i},\BD{j},\BD{k}$, etc. (resp. $i,j,k$, etc.) to represent elements in $\Z^n$ (resp. $\Z$).
For $\BD{i}\in\Z^n$, we set $\norm{\BD{i}}:=\sum_{k=1}^{n}|\BD{i}_k|$.
Fix $r\in \N$ and let $B_{\BD{0}}^{r}=\{\BD{k}\in\Z^{n}\,|\, \norm{\BD{k}}\leq r\}.$
We introduce a local potential as follows (cf. \cite{LC, MR2012JDE, Miao}).
Assume that $s\in C^2(\R^{B_{\BD{0}}^{r}}, \R)$ satisfies:
\begin{enumerate}[({S}1)]
  \item \label{eq:S1} $s(u+1_{B_{\BD{0}}^{r}})=s(u)$, where $1_{B_{\BD{0}}^{r}}$ is the constant function $1$ on $B_{\BD{0}}^{r}$;
  \item \label{eq:S2} $s$ is bounded from below and coercive in the following sense,
      \begin{equation*}
        \lim_{|u(\BD{k})-u(\BD{j})|\to \infty}s(u)=\infty, \textrm{ for $\BD{k}, \BD{j}\in B_{\BD{0}}^{r}$ with $\norm{\BD{k}-\BD{j}}=1$;}
      \end{equation*}

  \item \label{eq:S3} $\partial_{\BD{k}, \BD{j}}s \leq 0$ for $\BD{k}, \BD{j}\in B_{\BD{0}}^{r}$ with $\BD{k}\neq \BD{j}$, while $\partial_{\BD{0},\BD{j}}s<0$ for any $\BD{j}$ with $\norm{\BD{j}}=1$.
\end{enumerate}
For $u\in\R^{\Z^n}$, set $S_{\BD{j}}(u)=s(\tau_{-\BD{j}_n}^{n}\cdots \tau_{-\BD{j}_1}^{1}u|_{B_{\BD{0}}^{r}})$, where $\tau_{-k}^{j}: \R^{\Z^n}\to \R^{\Z^n}$ is defined by $\tau_{-k}^{j}u(\BD{i})=u(\BD{i}+k\BD{e}_j)$. With these locally potentials $S_{\BD{j}}$, we define a formal sum
\begin{equation}\label{eq:potential}
\sum_{\BD{j}\in\Z^n}S_{\BD{j}}(u).
\end{equation}
The
Euler-Lagrange equation of \eqref{eq:potential} is
\begin{equation}\label{eq:PDE}
    \sum_{\BD{j}\in\Z^n}\partial_{\BD{i}}S_{\BD{j}}(u)=\sum_{\BD{j}:\norm{\BD{j}-\BD{i}}\leq r}\partial_{\BD{i}}S_{\BD{j}}(u)=0
\end{equation}
for all $\BD{i}\in\Z^n$.
Note that \eqref{eq:PDE} always makes sense since the sum in this equation only involves finite terms.
An example of \eqref{eq:PDE} is
\begin{equation}\label{eq:exam}
-\frac{1}{2n}\sum_{\BD{j}: ||\textbf{j}-\textbf{i}||=1}(u(\textbf{j})-u(\textbf{i}))+ V'(u(\textbf{i}))=0,
\end{equation}
where $V\in C^{2}(\mathbb{R},\mathbb{R})$ is $1$-periodic.
Setting
\begin{equation*}
s(u|_{B_{\BD{0}}^{1}})=V(u(\BD{0}))+\frac{1}{8n}\sum_{\BD{k}: \norm{\BD{k}}=1}(u(\BD{k})-u(\BD{0}))^2
\end{equation*}
and letting $S_j$ as above, we have that \eqref{eq:exam} is exact the Euler-Lagrange equation of \eqref{eq:potential}.
\eqref{eq:exam} is an $n$-D form of the classical $1$-D FK model.

\subsection{Minimal and Birkhoff solutions in \cite{LC}.}
\

In \cite{LC}, we used the minimization methods developed by Rabinowitz and Stredulinsky (\cite{RS}) to construct heteroclinic solutions for a generalized FK model.
In this paper, we will construct some multitransition solutions, a term by Rabinowitz and Stredulinsky, by the methods in \cite{RS}.

The first theme of Moser-Bangert theory is to study minimal and without self-intersections (Birkhoff, in our case) solutions.
For $v\in \R^{\Z^n}$, define $supp(v)=\{\BD{i}\,|\, v(\BD{i})\neq 0\}$.
We say that $v$ has compact support if $supp(v)$ is a bounded set of $\Z^n$.
A point $\BD{i}$ of $E(\subset \Z^n)$ is called to be an interior point of $E$ if
\begin{equation*}
  \BD{i}+B_{\BD{0}}^{r}:=\{\BD{i}+\BD{j}\,|\, \BD{j}\in B_{\BD{0}}^{r} \}\subset E.
\end{equation*}
We denote all the interior points of $E$ by $int(E)$.
A configuration $u\in\R^{\Z^n}$ is said to be minimal, if for any $v\in\R^{\Z^n}$ with compact support,
\begin{equation*}
  \sum_{\BD{j}\in E}S_{\BD{j}}(u)\leq \sum_{\BD{j}\in E}S_{\BD{j}}(u+v)
\end{equation*}
holds
for any bounded set $E$ with $supp(v)\subset int(E)$.

To introduce Birkhoff configuration, we define some partial ordered relations in $\R^{\Z^n}$ as follows.
We say $u\leq (\textrm{ or } <, \textrm{ or } =) v$ if $u(\BD{j})\leq (\textrm{ or } <, \textrm{ or } =) v(\BD{j})$ for all $\BD{j}\in\Z^n$;
$u\lneqq v$ if $u\leq v$ and there exists some $\BD{k}$ such that $u(\BD{k})\neq v(\BD{k})$;
Similarly one define $\geq, \gneqq, >$ in $\R^{\Z^n}$.
We say $u$ is Birkhoff if for any $k\in\Z$ and for any $j=1,2,\cdots,n$, one and only one of the following holds:
 \begin{equation*}
   \tau_{-k}^{j}u<u\quad\quad \textrm{or} \quad \quad  \tau_{-k}^{j}u=u\quad\quad \textrm{or} \quad \quad \tau_{-k}^{j}u>u.
 \end{equation*}

The unit vector with $j$th component $1$ and other components $0$ is denoted by $\BD{e}_j$.
If $u$ is $1$-periodic in all directions, that is, $u(\BD{i}+\BD{e}_j)=u(\BD{i})$ for all $\BD{i}\in \Z^n$ and $j=1,2,\cdots, n$,
then it will be denoted by $u\in \R^{(\Z/\{1\})^{n}}$.
Similarly we can define $\R^{(\Z/\{k\})^{n}}$, $\R^{\Z\times (\Z/\{1\})^{n-1}}$, etc.
The most important feature of Birkhoff configuration is that it has a rotation vector.
Rotation vector is an analogue of rotation number of Aubry-Mather theory
and is used in the clarification of minimal and Birkhoff solutions.
For $u\in\R^{\Z^n}$, if the limit
\[
\lim_{|m|\to\infty}\frac{u(m\BD{i})}{m}
\]
exists and equals $\langle \alpha,\BD{i}\rangle$,
we say $u$ has rotation vector $\alpha$.
To state and prove our main results simply,
we take $\alpha=\BD{0}$
and will indicate how to generalize $\alpha=\BD{0}$ to $\alpha\in\Q$ at the end of this paper.
In recent years, minimal and Birkhoff solutions of \eqref{eq:PDE} are carefully studied, cf. \cite{dela1, dela2, MR2012JDE, Miao, LC}.
We mention some results related to this paper.
In \cite{LC} (see also \cite{MR2012JDE}), the authors obtained periodic solutions with rotation vector $\alpha=\BD{0}$, denoted by $\MM_{\BD{0}}$.
$\MM_{\BD{0}}$ is an ordered set. That is, for any $u,v\in\MM_{\BD{0}}$, $u<v$, or $u=v$, or $u>v$.
To construct heteroclinic solutions, we need a gap condition:
\begin{equation}\label{eq:*0}
\textrm{there are adjacent $v_0$, $w_0\in \mathcal{M}_0$ with $v_0 < w_0$}.\tag{$*_0$}
\end{equation}
Recall that in an ordered set $A$, $v,w\in A$ with $v<w$ are
said to be adjacent if there is no element of $A$ lying between $v$ and $w$.

In \cite{LC}, the authors proved that if \eqref{eq:*0} holds, there are heteroclinic solutions lying
between $v_0,w_0$ and asymptotic to $v_0$ (resp. $w_0$) as $\BD{i}_1\to -\infty$ and to $w_0$ (resp. $v_0$) as $\BD{i}_1\to \infty$,
and these solutions are denoted by $\MM_1(v_0,w_0)$ (resp. $\MM_1(w_0,v_0)$).
Note the elements in $\MM_1(v_0,w_0)$ and $\MM_1(w_0,v_0)$ are $1$-periodic in $\BD{i}_2, \cdots, \BD{i}_n$.
It is also proved in \cite{LC} that $\MM_1(w_0,v_0)$ and $\MM_1(w_0,v_0)$ are ordered.
To construct more complex heteroclinic solutions, we need
\begin{equation}\label{eq:*1}
\begin{split}
    \textrm{there are adjacent } &v_1, w_1 \in \MM_1(v_0,w_0) \textrm{ with } v_1<w_1, \\
    \textrm{ and there are adjacent } &\tilde{v}_1, \tilde{w}_1 \in \MM_1(w_0,v_0) \textrm{ with } \tilde{v}_1<\tilde{w}_1 .
    \end{split}\tag{$*_1$}
\end{equation}
If $s$ does not satisfy \eqref{eq:*0} and \eqref{eq:*1}, one can perturb $s$ to obtain these conditions.
If $s$ satisfies \eqref{eq:*0} and \eqref{eq:*1}, then for all $\bar{s}$ close to $s$ in some sense, \eqref{eq:*0} and \eqref{eq:*1} are satisfied by $\bar{s}$.
Please see \cite{LC} for more discussions.

\subsection{Main results.}\label{sec:00003}
\

Now our main result of this paper can be stated. \\

\emph{Suppose $s\in C^2(\R^{B^{r}_{\BD{0}}},\R)$ satisfies (S\ref{eq:S1})-(S\ref{eq:S3}),
and \eqref{eq:*0} \eqref{eq:*1} holds.
Then
\begin{enumerate}
  \item there are infinitely many homoclinic solutions asymptotic to $v_0$
(resp. to $w_0$) as $|\BD{i}_1| \to \infty$ and $1$-periodic in $\BD{i}_2,\cdots,\BD{i}_n$; \label{item:1}
  \item there are infinitely many solutions of \eqref{eq:PDE} that asymptotic to $v_0$ (resp. to $w_0$) as $\BD{i}_1 \to -\infty$
and to $w_0$ (resp. to $v_0$) as $\BD{i}_1 \to \infty$, and $1$-periodic in $\BD{i}_2,\cdots,\BD{i}_n$.  \label{item:2}
\end{enumerate}}

The basic heteroclinic solutions in $\MM_{1}(v_0,w_0)$ and $\MM_{1}(w_0,v_0)$ are $1$ transition solutions.
The homoclinic solutions obtained in \eqref{item:1} 
are $2k$ transition ($k\geq 1$).
For solutions homoclinic to $v_0$, $2k$ transition means it will experience $2k$ times phase transitions
before returning back to $v_0$.
Similarly, the heteroclinic solutions of \eqref{item:2} are $(2k+1)$ transition ($k\geq 1$).

Note that if $v$ is a solution of \eqref{eq:PDE}, so is $\tau_{k}^{1}v$ for any $k\in\Z$.
We say $\tau_{k}^{1}v$ and $v$ are not geometrically distinct (\cite{Ra}).
But in our results,
there are infinitely many geometrically distinct solutions.
Please see Remark \ref{rem:6.10} below.

This paper is organized as follows.
Section \ref{sec:pre} gives some preliminaries needed for proving the existence of multitransition solutions of \eqref{eq:PDE}
and section \ref{chap:7} is devoted to prove the existence of $2$ transition solutions.
In section \ref{chap:8}, we illustrate
the existence of general $k$ transition solutions of \eqref{eq:PDE} and give some generalizations.

\section{Preliminaries}\label{sec:pre}

We prove our main theorem by constrained minimization method
that will be stated
in this section.
Before that, we recall some facts about the generalized FK model.

\begin{lem}[{cf. \cite[Lemma 2.6]{Miao}, \cite[Lemma 2.8]{LC}}]\label{lem:2.6miao}
For $u, v\in\R^{\Z^n}$ and for any finite set $B\subset \Z^n$, we have
\begin{equation*}
   \sum_{\BD{j}\in B}S_{\BD{j}}(\phi)+\sum_{\BD{j}\in B}S_{\BD{j}}(\psi)\leq \sum_{\BD{j}\in B}S_{\BD{j}}(u)+\sum_{\BD{j}\in B}S_{\BD{j}}(v),
\end{equation*}
where $\phi, \psi$ are defined by $\phi=\max(u,v)$, $\psi=\min(u,v)$.
\end{lem}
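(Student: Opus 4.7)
The plan is to reduce the displayed inequality to a single per-window, finite-dimensional submodularity inequality for $s$ itself. For each fixed $\BD{j}\in B$ the local potential $S_{\BD{j}}$ depends on a configuration only through its restriction to the translated window $\BD{j}+B_{\BD{0}}^{r}$, and the pointwise operations $\max,\min$ commute with translation and restriction. It therefore suffices to establish the term-by-term inequality $S_{\BD{j}}(\phi)+S_{\BD{j}}(\psi)\le S_{\BD{j}}(u)+S_{\BD{j}}(v)$; setting $U,V\in\R^{B_{\BD{0}}^{r}}$ to be the restrictions of the corresponding translates of $u,v$, this reduces to
\[
s(\max(U,V))+s(\min(U,V))\le s(U)+s(V)
\]
for arbitrary $U,V\in\R^{B_{\BD{0}}^{r}}$. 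Summing over $\BD{j}\in B$ then yields the lemma.

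To prove this local submodularity I would use hypothesis (S\ref{eq:S3}) directly: it says that every off-diagonal entry of the Hessian of $s$ is non-positive. Write $m=\min(U,V)$ and split $B_{\BD{0}}^{r}$ into the disjoint subsets $I_{+}=\{\BD{p}\in B_{\BD{0}}^{r}:U(\BD{p})>V(\BD{p})\}$ and $I_{-}=\{\BD{p}\in B_{\BD{0}}^{r}:U(\BD{p})<V(\BD{p})\}$. Put $h_1=(U-V)^{+}$, supported in $I_{+}$, and $h_2=(V-U)^{+}$, supported in $I_{-}$. Then $U=m+h_1$, $V=m+h_2$, $\max(U,V)=m+h_1+h_2$, with $h_1,h_2\ge 0$ having \emph{disjoint} supports, so the desired inequality is equivalent to
\[
s(m+h_1+h_2)-s(m+h_1)-s(m+h_2)+s(m)\le 0.
\]

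The last step is to recognise the left-hand side as a mixed second difference of $g(t,r):=s(m+t h_1+r h_2)$ and represent it via the fundamental theorem of calculus as $\int_{0}^{1}\!\!\int_{0}^{1}\partial_{t}\partial_{r}g(t,r)\,dt\,dr$. A direct computation gives
\[
\partial_{t}\partial_{r}g(t,r)=\sum_{\BD{p}\in I_{+}}\sum_{\BD{q}\in I_{-}}\partial_{\BD{p},\BD{q}}s\bigl(m+t h_1+r h_2\bigr)\,h_1(\BD{p})\,h_2(\BD{q}).
\]
Because $I_{+}\cap I_{-}=\emptyset$, every index pair $(\BD{p},\BD{q})$ in this double sum satisfies $\BD{p}\ne\BD{q}$, so (S\ref{eq:S3}) forces $\partial_{\BD{p},\BD{q}}s\le 0$; together with $h_1,h_2\ge 0$ the integrand is pointwise non-positive, and integrating over $[0,1]^{2}$ closes the argument. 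There is no substantive obstacle here: the whole proof turns on the single observation that the disjoint supports of $h_1$ and $h_2$ force every second derivative appearing in the mixed difference to be off-diagonal, precisely where (S\ref{eq:S3}) supplies the sign.
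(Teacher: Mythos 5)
Your proof is correct, and it is the standard argument: the paper itself gives no proof of this lemma but imports it from \cite[Lemma 2.6]{Miao} and \cite[Lemma 2.8]{LC}, where exactly this reduction to the single-window inequality $s(\max(U,V))+s(\min(U,V))\le s(U)+s(V)$ and the mixed-second-difference/double-integral argument using (S\ref{eq:S3}) on the disjoint supports of $(U-V)^{+}$ and $(V-U)^{+}$ is carried out. No gap.
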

For $v,w\in \MM_0$ with $v<w$, set
\[
\hat{\Gamma}_1(v,w)=\{u\in \R^{\Z\times (\Z/\{1\})^{n-1}}\,|\,v\leq u\leq w\},
\]
and
\[
\begin{split}
\Gamma_1(v,w)=\{u\in \hat{\Gamma}_1(v,w)\,|\, &\norm{u-v}_{\BD{T}_i}\to 0, \quad i\to -\infty, \\
& \norm{u-w_0}_{\BD{T}_i}\to 0, \quad i\to \infty\}.
\end{split}
\]
Here $\BD{T}_i=i\BD{e}_1$ and $\norm{u}_{\BD{j}}=|u(\BD{j})|$.
Define \[
c_0=\inf_{u\in \R^{(\Z/\{1\}) ^{n}}}S_{\BD{0}}(u).
\]
For $u\in \hat{\Gamma}_1(v,w)$, $p,q\in \Z$ with $p\leq q$, define
\begin{equation*}
    J_{1,p}(u)= S_{\BD{T}_p}(u)-c_0, \quad
      J_{1;p,q}(u)= \sum_{j=p}^{q}J_{1,j}(u),
\end{equation*}
and
\begin{equation}\label{eq:defofJ1}
  J_1(u) = \liminf_{p\to -\infty \atop q\to \infty}J_{1;p,q}(u).
\end{equation}
The next lemma shows that $J_1$ is well-defined for $u\in\hat{\Gamma}_1(v,w)$.
\begin{lem}[{cf. \cite[Propositions 3.2, 3.4 and Lemma 3.3]{LC}}]\label{lem:23}
\begin{enumerate}
  \item If $u\in\hat{\Gamma}_1(v,w)$ and $p\leq q\in\Z$, there is a constant $K_1=K_1(v,w) \geq 0$, such that
\begin{equation*}
    -K_1\leq J_{1;p,q}(u)\leq J_1(u)+2K_1.
\end{equation*}
  \item If $u\in \Gamma_1(v,w)$, $J_{1,i}(u)\to 0$ as $|i|\to \infty$.
If $u\in \Gamma_1(v,w)$ and $J_1(u)<\infty$,
\[
J_1(u)=\lim_{p\to -\infty \atop q\to \infty}J_{1;p,q}(u),
\]
 that is, the $\liminf$ in \eqref{eq:defofJ1} becomes limit.
\end{enumerate}
\end{lem}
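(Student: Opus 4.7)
The plan is to bootstrap from the minimality of $v, w \in \MM_{\BD{0}}$, using that the transverse $1$-periodicity of $u, v, w$ reduces all analysis to estimates along the single direction $\BD{e}_1$. For the lower bound in part (1), fix $p \leq q$ and introduce the competitor $\tilde u \in \R^{\Z \times (\Z/\{1\})^{n-1}}$ defined to equal $u$ when $p - r \leq \BD{i}_1 \leq q + r$ and to equal $v$ otherwise. Then $\tilde u - v$ has support of bounded width in $\BD{i}_1$, so the minimality of $v$ yields $\sum_{\BD{j}}\bigl(S_{\BD{j}}(\tilde u) - S_{\BD{j}}(v)\bigr) \geq 0$, and only indices with $p - 2r \leq \BD{j}_1 \leq q + 2r$ contribute. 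The core block $\{p,\ldots,q\}$ produces exactly $J_{1;p,q}(u)$ since $\tilde u$ agrees with $u$ on the relevant stencil. On the two boundary strips of width $2r$, the quantity $|S_{\BD{T}_j}(\tilde u) - c_0|$ is controlled by a constant $K_1 = K_1(v, w)$ via the mean-value theorem and the $C^2$-bound on $s$ over the compact cube $[v, w]^{B^r_{\BD{0}}}$. Rearranging yields $J_{1;p,q}(u) \geq -K_1$. The upper bound in (1) then follows formally: for $p' \leq p \leq q \leq q'$ the telescoping identity
\begin{equation*}
J_{1;p',q'}(u) = J_{1;p', p-1}(u) + J_{1;p,q}(u) + J_{1;q+1, q'}(u)
\end{equation*}
combined with the lower bound applied to each outer sum gives $J_{1;p',q'}(u) \geq J_{1;p,q}(u) - 2K_1$, and passing to $\liminf$ in $(p',q')$ produces $J_1(u) \geq J_{1;p,q}(u) - 2K_1$.

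The first half of part (2) is a continuity argument. The hypothesis $\norm{u - v}_{\BD{T}_i} \to 0$ together with the transverse $1$-periodicity of $u$ and $v$ forces $u|_{\BD{T}_i + B^r_{\BD{0}}} \to v|_{\BD{T}_i + B^r_{\BD{0}}}$ uniformly as $i \to -\infty$; continuity of $s$ then gives $S_{\BD{T}_i}(u) \to c_0$, hence $J_{1,i}(u) \to 0$. The $i \to +\infty$ case uses $w$ symmetrically, noting $S_{\BD{T}_i}(w) = c_0$ since $w \in \MM_{\BD{0}}$ is itself a periodic minimizer.

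The main obstacle is the final assertion, that the $\liminf$ becomes an actual limit once $J_1(u) < \infty$. The plan is to refine the boundary estimate from paragraph one: when the left endpoint $p$ lies deep in the asymptotic regime where $u$ is close to $v$, the mean-value bound on the left boundary strip sharpens from the uniform $K_1$ to a modulus $\omega(p) \to 0$, and symmetrically at $+\infty$ with $w$ it sharpens to $\omega'(q) \to 0$. Coupling these refined one-sided bounds with a minimizing sequence $(p_n, q_n)$ realizing the $\liminf$ and reinserting them into the telescoping identity should squeeze $J_{1;p',q'}(u)$ between $J_1(u) - o(1)$ and $J_1(u) + o(1)$ as $(p', q') \to (-\infty, +\infty)$, forcing the limit to exist and equal $J_1(u)$. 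The technical core is handling the boundary terms at the intermediate endpoints $p_n, q_n$, which are not asymptotically close to either $v$ or $w$; the natural fix is to compare $u$ with $v$ on the left half of the window and with $w$ on the right half so that these middle boundary contributions are absorbed into the convergence of the minimizing sequence.
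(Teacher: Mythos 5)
The paper itself does not prove this lemma; it cites the companion reference [LC, Propositions~3.2, 3.4 and Lemma~3.3]. Your reconstruction follows what I take to be the standard route, and the overall architecture is sound, but two points deserve scrutiny.

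First, when you invoke the minimality of $v$ to compare $\tilde u$ against $v$, the perturbation $\tilde u - v$ is supported in a slab $p - r \leq \BD{i}_1 \leq q + r$ that is \emph{unbounded} in the transverse directions, so it is not compactly supported as the paper's definition of minimality requires. The correct step is the ``energy per period'' reduction: since $\tilde u$ and $v$ are $1$-periodic in $\BD{i}_2, \ldots, \BD{i}_n$ and $v$ is additionally $1$-periodic in $\BD{i}_1$, one extends $\tilde u$ to a periodic function of $\BD{i}_1$ with large period and invokes the fact, recorded in the paper via [LC, Proposition~3.1] in the proof of Proposition~\ref{prop:6.74}, that the per-period value of $J_1$ of any periodic configuration is nonnegative. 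This gives precisely $\sum_{j=p-2r}^{q+2r} J_{1,j}(\tilde u) \geq 0$, which splits into $J_{1;p,q}(u)$ plus two boundary strips of width $2r$ each, and the lower bound follows with $K_1$ controlling those strips. So the step is salvageable, but it rests on a different (periodic) minimality fact than the one you quote. Your upper bound via the telescoping identity is then clean and correct.

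Second, in the ``$\liminf$ becomes limit'' part, your diagnosis that the intermediate endpoints $p_n, q_n$ ``are not asymptotically close to either $v$ or $w$'' is off: since $p_n \to -\infty$ and $q_n \to +\infty$, the configuration $u$ \emph{is} close to $v$ near $p_n$ and close to $w$ near $q_n$, and this is exactly what makes the refinement work. The correct statement is a one-sided refined tail bound: for all $p'' \leq p' - 1$ with $p'$ deep enough in the left asymptotic regime, $J_{1;p'',p'-1}(u) \geq -\omega(p')$ with $\omega(p') \to 0$ as $p' \to -\infty$ (glue $u$ to $v$ at both ends of the window $[p'', p'-1]$; both boundary strips now lie where $u \approx v$, so both are controlled by a modulus of continuity of $s$), and symmetrically on the right with $w$. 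Feeding these into the telescoping identity $J_{1;p_n,q_n}(u) = J_{1;p_n,p'-1}(u) + J_{1;p',q'}(u) + J_{1;q'+1,q_n}(u)$ and letting $n \to \infty$ yields $J_{1;p',q'}(u) \leq J_1(u) + \omega(p') + \omega'(q')$, i.e.\ $\limsup J_{1;p',q'}(u) \leq J_1(u)$. Combined with the definition of $J_1$ as the $\liminf$, the limit exists; you do not need a matching lower bound $J_{1;p',q'}(u) \geq J_1(u) - o(1)$, and in fact that estimate would require more work. The ``squeeze'' framing is therefore misleading, though the underlying refined-boundary idea is the right one.
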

When one wants to apply minimization method,
one of the difficulties is to show that a minimization sequence has a convergent subsequence.
But in our case, it is easy to overcome this difficulty.
\begin{lem}[{cf. \cite[Proposition 3.7]{LC}}]\label{lem:2.50}
Let $\MY\subset \hat{\Gamma}_1 (v,w)$ and define
\begin{equation}\label{eq:2.51}
c(\MY)=\inf_{u\in \MY}J_1 (u).
\end{equation}
Suppose $(u_k)$ is a minimizing sequence for \eqref{eq:2.51},
then there is a $U\in \hat{\Gamma}_1(v,w)$ such that along a subsequence, $u_k \to U$ pointwise.
If $c(\MY)<\infty$, then
\begin{equation*}
  -K_1\leq J_1(U)\leq c(\MY)+1+2K_1,
\end{equation*}
where $K_1$ is defined in Lemma \ref{lem:23}.
\end{lem}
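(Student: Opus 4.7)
The plan is to extract a pointwise-convergent subsequence by compactness, to observe that the pointwise limit sits in $\hat{\Gamma}_1(v,w)$ because the constraints defining this set are pointwise, and then to push the $J_1$ bounds through the limit using the estimates from Lemma \ref{lem:23}.

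First I would invoke compactness. Since $v(\BD{j})\leq u_k(\BD{j})\leq w(\BD{j})$ for every $\BD{j}\in\Z^n$, the sequence $(u_k(\BD{j}))_k$ is bounded in $\R$ for each $\BD{j}$. The lattice $\Z\times (\Z/\{1\})^{n-1}$ is countable, so a standard diagonal extraction yields a subsequence (still denoted $(u_k)$) and a configuration $U\in \R^{\Z\times (\Z/\{1\})^{n-1}}$ with $u_k\to U$ pointwise. The inequality $v\leq U\leq w$ is preserved pointwise and periodicity in $\BD{i}_2,\ldots,\BD{i}_n$ is preserved too, so $U\in\hat{\Gamma}_1(v,w)$.

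Next I would bound $J_1(U)$ by first bounding $J_{1;p,q}(U)$ for each fixed $p\leq q$. Each $S_{\BD{T}_j}$ depends only on finitely many values $u(\BD{i})$ with $\BD{i}$ in a bounded neighborhood of $\BD{T}_j$, and $s$ is continuous, so pointwise convergence $u_k\to U$ forces $S_{\BD{T}_j}(u_k)\to S_{\BD{T}_j}(U)$ and hence
\[
J_{1;p,q}(u_k)\longrightarrow J_{1;p,q}(U)\quad (k\to\infty),
\]
for every fixed $p\leq q$. Assuming $c(\MY)<\infty$, the minimizing property gives $J_1(u_k)\leq c(\MY)+1$ for $k$ large, and Lemma \ref{lem:23}(1) yields
\[
J_{1;p,q}(u_k)\leq J_1(u_k)+2K_1\leq c(\MY)+1+2K_1.
\]
Passing $k\to\infty$ we obtain $J_{1;p,q}(U)\leq c(\MY)+1+2K_1$, and then taking $\liminf_{p\to-\infty,\,q\to\infty}$ gives $J_1(U)\leq c(\MY)+1+2K_1$. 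The lower bound $J_1(U)\geq -K_1$ is immediate: by Lemma \ref{lem:23}(1) applied to $U\in\hat{\Gamma}_1(v,w)$, each partial sum $J_{1;p,q}(U)\geq -K_1$, and this survives the $\liminf$.

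I do not expect a genuine obstacle here: the two ingredients—lattice compactness under the uniform constraint $v\leq u_k\leq w$ and the localized dependence of $S_{\BD{T}_j}$—combine routinely. The only point that deserves a little care is that $J_1$ is defined as a $\liminf$ of partial sums rather than a true limit, which is why one cannot simply take $k\to\infty$ in $J_1(u_k)$ directly; the argument above circumvents this by working at the level of the finite sums $J_{1;p,q}$, where pointwise convergence is enough, and only afterwards taking the $\liminf$ in $p,q$.
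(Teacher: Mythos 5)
Your proof is correct and is essentially the standard argument for this type of statement: a diagonal extraction gives the pointwise limit $U\in\hat{\Gamma}_1(v,w)$, then one passes $k\to\infty$ at the level of each finite sum $J_{1;p,q}$ (which works because each $S_{\BD{T}_j}$ depends on only finitely many lattice values and $s$ is continuous), and finally the two-sided estimate from Lemma~\ref{lem:23}(1) is applied, with the ``$+1$'' coming from $J_1(u_k)\leq c(\MY)+1$ for $k$ large. The paper defers the proof to \cite[Proposition 3.7]{LC}, and your argument matches the intended route.
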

The next proposition tells us how to verify
a minimizer of a suitable functional over a
set is a solution of \eqref{eq:PDE}.
For $i\in\Z$, define
\begin{equation}\label{eq:delta}
  \delta_{\BD{T}_i}(\BD{j})=\left\{
                      \begin{array}{ll}
                        1, & \BD{j}_1=i, \\
                        0, & \BD{j}_1\neq i.
                      \end{array}
                    \right.
\end{equation}
\begin{lem}[{cf. \cite[Proposition 3.8]{LC}}]\label{lem:2.64}
Let $\MY\subset \hat{\Gamma}_1 (v,w)$.
If $c(\MY)<\infty$ and there is a minimizing sequence $(u_k)$ for $c(\MY)$ such that for some $i\in \Z $,
the function $\delta_{\BD{T}_i}$ and some $t_0>0$, we have
\begin{equation}\label{eq:2.65}
c(\MY)\leq J_{1} (u_k +t\delta_{\BD{T}_i})+\epsilon_k
\end{equation}
for all $|t|\leq t_0$, where $\epsilon_k \to 0$ as $k\to \infty$.
Then the limit $U$ of $u_k$ satisfies \eqref{eq:PDE} at $\BD{T}_i$.
Moreover, $U$ satisfies \eqref{eq:PDE} at any $\BD{j}$ with $\BD{j}_1=(\BD{T}_i)_1=i$.
\end{lem}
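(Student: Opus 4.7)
The plan is to exploit the hypothesis that $u_k$ is nearly optimal even after a slab perturbation $t\delta_{\BD{T}_i}$ to derive a first-order stationarity condition on the pointwise limit $U$, and then identify that condition with \eqref{eq:PDE} at $\BD{T}_i$. Structurally the argument is standard for a constrained minimization lemma, but because $\delta_{\BD{T}_i}$ is supported on an entire hyperplane while $U$ is only periodic in the transverse directions $\BD{e}_2, \ldots, \BD{e}_n$, comparing the resulting first variation with the pointwise Euler--Lagrange expression is the one step where careful bookkeeping is required.

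First I apply Lemma \ref{lem:2.50} and pass (without relabeling) to a subsequence along which $u_k \to U$ pointwise for some $U \in \hat{\Gamma}_1(v,w)$ with $J_1(U)$ finite. Since $S_{\BD{T}_p}$ depends only on values of its argument on $\BD{T}_p + B_{\BD{0}}^r$, while $\delta_{\BD{T}_i}$ is supported on $\{\BD{j}\,|\,\BD{j}_1 = i\}$, the increment
\[
F(u,t) := J_1(u + t\delta_{\BD{T}_i}) - J_1(u) = \sum_{|p-i|\leq r}\bigl[S_{\BD{T}_p}(u + t\delta_{\BD{T}_i}) - S_{\BD{T}_p}(u)\bigr]
\]
reduces to a finite sum whenever $J_1(u)$ is finite (Lemma \ref{lem:23} lets one push this cancellation through the $\liminf$); in particular $F(u,\cdot)$ is $C^2$ in $t$ and depends on $u$ through only finitely many coordinate values. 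The hypothesis \eqref{eq:2.65} reads $F(u_k,t) \geq (c(\MY) - J_1(u_k)) - \epsilon_k$ on $|t|\leq t_0$, whose right-hand side tends to $0$ uniformly in $t$ since $J_1(u_k) \to c(\MY)$. Pointwise convergence $u_k \to U$ combined with the $C^2$ regularity of $s$ gives $F(u_k, t) \to F(U, t)$ for each fixed $t$, so in the limit $F(U, t) \geq 0 = F(U,0)$ on $|t| \leq t_0$; as $F(U,\cdot)$ is $C^2$, the minimum at $t=0$ forces $\partial_t F(U,t)|_{t=0} = 0$.

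Expanding this derivative by the chain rule yields
\[
0 = \sum_{|p-i|\leq r}\ \sum_{\substack{\BD{l}\,:\,\BD{l}_1 = i,\\ \norm{\BD{l}-\BD{T}_p}\leq r}} \partial_{\BD{l}} S_{\BD{T}_p}(U).
\]
For each such $\BD{l}$ write $\BD{l} = \BD{T}_i + \BD{m}$ with $\BD{m}_1 = 0$. The translation covariance of the local potentials combined with the transverse periodicity of $U$ (shifting $U$ by $-\BD{m}$ leaves it unchanged on $B_{\BD{0}}^r$) yields
\[
\partial_{\BD{l}} S_{\BD{T}_p}(U) = \partial_{\BD{T}_i} S_{\BD{T}_p - \BD{m}}(U),
\]
and reindexing by $\BD{j} = \BD{T}_p - \BD{m}$ collapses the double sum into $\sum_{\BD{j}\,:\,\norm{\BD{j} - \BD{T}_i}\leq r} \partial_{\BD{T}_i} S_{\BD{j}}(U) = 0$, which is exactly \eqref{eq:PDE} at $\BD{T}_i$. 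The ``moreover'' clause is then immediate: for any $\BD{j}$ with $\BD{j}_1 = i$, the transverse periodicity of $U$ turns \eqref{eq:PDE} at $\BD{j}$ into \eqref{eq:PDE} at $\BD{T}_i$ by relabeling. As anticipated, the main obstacle is this final bookkeeping step, since the slab support of $\delta_{\BD{T}_i}$ makes the first variation a sum over a whole hyperplane rather than a single lattice point, and converting it to the pointwise equation hinges on the interplay between translation covariance of $S_{\BD{j}}$ and the transverse periodicity of $U$.
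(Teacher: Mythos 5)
Your argument is correct and is precisely the first-variation proof one expects for this kind of constrained-minimization lemma: you pass to the pointwise limit via Lemma \ref{lem:2.50}, reduce $J_1(u+t\delta_{\BD{T}_i})-J_1(u)$ to the finite sum $F(u,t)$ over $|p-i|\leq r$ (valid since $\delta_{\BD{T}_i}$ only affects those slabs and $J_1(u_k)<\infty$), deduce that $F(U,\cdot)$ has an interior minimum at $t=0$, and then collapse the resulting hyperplane sum into $\sum_{\norm{\BD{j}-\BD{T}_i}\leq r}\partial_{\BD{T}_i}S_{\BD{j}}(U)=0$ using the translation covariance of the $S_{\BD{j}}$ together with the $1$-periodicity of $U$ in $\BD{e}_2,\ldots,\BD{e}_n$, with the ``moreover'' clause following by the same transverse translation. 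The paper does not reproduce a proof of this lemma---it cites \cite[Proposition 3.8]{LC}---so there is no in-paper proof to compare against line by line, but your reasoning is sound, and the reindexing step you flag as the crux is handled correctly.
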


We have the following strong comparison result, which is very important in our analysis.
\begin{lem}[{cf. \cite[Lemma 2.5]{Miao}, \cite[Lemma 4.5]{MR2012JDE}, \cite[Lemma 2.6]{LC}}]\label{lem:unknown}
Assume that $u,v$ are solutions of \eqref{eq:PDE} and $u\leq v$.
Then either $u<v$ or $u= v$.
\end{lem}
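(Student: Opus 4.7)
The statement is a classical strong comparison principle for lattice elliptic equations, and the plan is a lattice analogue of the Hopf boundary point / strong maximum principle argument. The dichotomy reduces to showing: if $w:=v-u\geq 0$ vanishes at some single lattice point $\BD{i}_0$, then $w\equiv 0$. I would prove this by propagating equality along nearest-neighbor steps and then invoking the connectedness of $\Z^n$.

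First, I would linearize the difference of the two copies of \eqref{eq:PDE}. Subtracting and applying the fundamental theorem of calculus term-by-term yields, for every $\BD{i}\in\Z^n$,
\begin{equation*}
0 \;=\; \sum_{\BD{j}}\bigl(\partial_{\BD{i}}S_{\BD{j}}(v)-\partial_{\BD{i}}S_{\BD{j}}(u)\bigr) \;=\; \sum_{\BD{k}\in \BD{i}+B_{\BD{0}}^{2r}} b_{\BD{i},\BD{k}}\,w(\BD{k}),
\end{equation*}
where $b_{\BD{i},\BD{k}}=\sum_{\BD{j}}\int_{0}^{1}\partial_{\BD{k}}\partial_{\BD{i}}S_{\BD{j}}(u+t w)\,dt$ and every sum is finite because $S_{\BD{j}}$ depends only on values in $\BD{j}+B_{\BD{0}}^{r}$.

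The next step is to read the sign structure of $b_{\BD{i},\BD{k}}$ out of (S\ref{eq:S3}). Using $\partial_{\BD{k}}\partial_{\BD{i}}S_{\BD{j}}(\cdot)=\partial_{\BD{k}-\BD{j},\BD{i}-\BD{j}}s(\cdot)$, the first clause of (S\ref{eq:S3}) gives $b_{\BD{i},\BD{k}}\leq 0$ for all $\BD{k}\neq \BD{i}$. When $\norm{\BD{k}-\BD{i}}=1$, the single summand with $\BD{j}=\BD{i}$ contributes $\int_{0}^{1}\partial_{\BD{0},\BD{k}-\BD{i}}s\,dt$, which is strictly negative by the second clause of (S\ref{eq:S3}); as all remaining summands are $\leq 0$, this forces $b_{\BD{i},\BD{k}}<0$ for every nearest neighbor $\BD{k}$ of $\BD{i}$.

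Finally I would apply the maximum-principle propagation. If $w(\BD{i}_0)=0$, the diagonal term in the linearized equation at $\BD{i}_0$ drops out and we are left with
\begin{equation*}
0 \;=\; \sum_{\BD{k}\neq \BD{i}_0} b_{\BD{i}_0,\BD{k}}\,w(\BD{k}),
\end{equation*}
a sum of non-positive terms. Each summand must therefore vanish, and combined with the strict negativity of $b_{\BD{i}_0,\BD{k}}$ for nearest neighbors $\BD{k}$ this yields $w(\BD{k})=0$ whenever $\norm{\BD{k}-\BD{i}_0}=1$. Iterating, the zero set of $w$ is open and closed in the nearest-neighbor graph on $\Z^n$, hence equals all of $\Z^n$, so $u=v$. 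The only step that needs careful bookkeeping is verifying that the single $\BD{j}=\BD{i}$ contribution to $b_{\BD{i},\BD{k}}$ is strictly negative for each nearest neighbor $\BD{k}$ and is not cancelled by the other (non-positive) contributions; once this is in hand, the rest is a routine lattice maximum-principle propagation.
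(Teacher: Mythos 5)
Your argument is correct, and it is essentially the standard proof that the cited references ([Lemma~2.5]{Miao}, [Lemma~4.5]{MR2012JDE}, [Lemma~2.6]{LC}) use; the present paper simply cites them without reproducing the details. Subtracting the two Euler--Lagrange equations, linearizing by the fundamental theorem of calculus to obtain $\sum_{\BD{k}} b_{\BD{i},\BD{k}}\,w(\BD{k})=0$, reading the sign of $b_{\BD{i},\BD{k}}=\sum_{\BD{j}}\int_0^1\partial_{\BD{k}-\BD{j},\BD{i}-\BD{j}}s\,dt$ from (S3) (non-positive off-diagonal, strictly negative for nearest neighbours via the $\BD{j}=\BD{i}$ term and symmetry of mixed partials), and propagating the zero set of $w\ge 0$ across the connected nearest-neighbour graph of $\Z^n$ is precisely the lattice strong-maximum-principle argument; I see no gap.
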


\begin{cor}[{cf. \cite[Corollary 2.7]{LC}}]\label{cor:bijiao}
Assume that $u, v$ are solutions of \eqref{eq:PDE}.
If $\psi:=\min(u,v)$ or $\phi:=\max(u,v)$ is a solution of \eqref{eq:PDE}, then
\begin{equation*}
    u<v, \textrm{\quad or \quad} u=v, \textrm{\quad or \quad} u>v.
\end{equation*}
\end{cor}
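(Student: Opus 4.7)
The plan is to deduce the trichotomy from two direct applications of the strong comparison principle, Lemma \ref{lem:unknown}. By symmetry it suffices to treat the case where $\psi := \min(u,v)$ is a solution of \eqref{eq:PDE}; the case where $\phi := \max(u,v)$ is a solution is handled by interchanging the roles of $u,v$ and reversing the corresponding inequalities (or by noting that $\phi = -\min(-u,-v)$ if one prefers a formal reduction).

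First I would observe that $\psi \le u$ and $\psi \le v$ on all of $\Z^n$, and that all three configurations $\psi, u, v$ solve \eqref{eq:PDE}. Applying Lemma \ref{lem:unknown} to the comparable pair $(\psi, u)$ yields the dichotomy $\psi = u$ or $\psi < u$, where the second alternative is strict at every lattice point by the definition of ``$<$'' recalled in the paper. The same reasoning applied to $(\psi, v)$ gives $\psi = v$ or $\psi < v$. Next I would rule out the joint case $\psi < u$ and $\psi < v$: strictness would demand $\psi(\BD{j}) < u(\BD{j})$ and $\psi(\BD{j}) < v(\BD{j})$ for every $\BD{j} \in \Z^n$, whereas by definition $\psi(\BD{j}) = \min(u(\BD{j}), v(\BD{j}))$ coincides with one of these two values at each $\BD{j}$, a contradiction.

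Hence the surviving cases are $\psi = u$ or $\psi = v$. If $\psi = u$ then $u \le v$ pointwise, and one further application of Lemma \ref{lem:unknown} to the solution pair $(u,v)$ delivers $u < v$ or $u = v$; symmetrically, $\psi = v$ gives $v \le u$ and hence $v < u$ or $v = u$. Combining these possibilities yields exactly $u < v$, $u = v$, or $u > v$, which is the claim.

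I do not anticipate a genuine obstacle here: the whole argument rests on Lemma \ref{lem:unknown}, and the only mildly subtle point is the observation that strict pointwise inequality between $\psi$ and both $u$ and $v$ is incompatible with the definition $\psi = \min(u,v)$. Once that is noted, the conclusion follows immediately, with no additional analysis required.
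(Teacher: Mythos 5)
Your argument is correct and is the natural proof via the strong comparison principle: two applications of Lemma~\ref{lem:unknown} to the comparable pairs $(\psi,u)$ and $(\psi,v)$, followed by the observation that $\psi=\min(u,v)$ agrees pointwise with one of $u,v$, so the strict alternative cannot hold against both simultaneously; this forces $\psi=u$ or $\psi=v$, after which one more invocation of Lemma~\ref{lem:unknown} upgrades the resulting weak inequality to the trichotomy. The paper cites the result from \cite{LC} without reproducing a proof, but this is exactly the argument one expects from the statement of Lemma~\ref{lem:unknown}. One small caveat: your parenthetical alternative of reducing the $\max$ case via $\phi=-\min(-u,-v)$ is not a valid formal reduction here, because \eqref{eq:PDE} is not in general invariant under $u\mapsto -u$ (nothing in (S\ref{eq:S1})--(S\ref{eq:S3}) forces $s(-\cdot)=s(\cdot)$), so $-u,-v$ need not be solutions; you should rely on the direct argument you describe first, applying Lemma~\ref{lem:unknown} to $(u,\phi)$ and $(v,\phi)$ with the order reversed, which works verbatim.
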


To introduce a useful comparison result that appears repeatedly, for $v\in\MM_0$, we define
\begin{equation*}
  \Gamma_1(v)=\{u\in\hat{\Gamma}_1(v-1,v+1)\,|\, \norm{u-v}_{\BD{T}_i}\to 0, \textrm{ as } |i|\to \infty\},
\end{equation*}
and
\begin{equation*}
  c_1(v)=\inf_{u\in\Gamma_1(v)}J_1(u),\quad \textrm{and} \quad \MM_1(v)=\{u\in\Gamma_1(v)\,|\, J_1(u)=c_1(v)\}.
\end{equation*}

\begin{lem}[{cf. \cite[Theorem 3.11]{LC}}]\label{lem:2.72}
If $s\in C^2(\R^{B^{r}_{\BD{0}}},\R)$ satisfies (S\ref{eq:S1})-(S\ref{eq:S3}), then $c_1(v)=0$ and $\MM_1(v)=\{v\}$.
\end{lem}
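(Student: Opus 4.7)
The plan is to show $c_1(v)=0$ by matching upper and lower bounds, and then establish uniqueness of the minimizer via a comparison-plus-max-principle argument.

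\textbf{Upper bound $c_1(v)\leq 0$.} Test $J_1$ at $v$ itself, which lies in $\Gamma_1(v)$. Because $v$ is $1$-periodic in all coordinates, $S_{\BD{T}_j}(v)=S_{\BD{0}}(v)$ for every $j\in\Z$; and because $v\in\MM_0$ is a periodic minimizer, a truncation argument against any competitor $\tilde v\in\R^{(\Z/\{1\})^n}$ (truncate $\tilde v$ to agree with $v$ outside a large cube, apply minimality of $v$, divide by the volume, and pass to the limit) shows $S_{\BD{0}}(v)=c_0$. Hence $J_{1,j}(v)=0$ for every $j$ and $J_1(v)=0$.

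\textbf{Lower bound $c_1(v)\geq 0$.} For $u\in\Gamma_1(v)$ I would first cut off $u$ in the $\BD{i}_1$ direction: set $\tilde u_{p,q}\in\R^{\Z\times(\Z/\{1\})^{n-1}}$ equal to $u$ when $p\leq\BD{j}_1\leq q$ and to $v$ otherwise. A second localization in the periodic directions, $w^{(N)}=\tilde u_{p,q}$ on $\Z\times[-N,N]^{n-1}$ and $w^{(N)}=v$ elsewhere, gives $w^{(N)}-v$ of finite support in $\Z^n$. Applying minimality of $v$ on a box $E_N$ slightly larger than $\mathrm{supp}(w^{(N)}-v)$ yields $\sum_{E_N}S_{\BD{j}}(v)\leq\sum_{E_N}S_{\BD{j}}(w^{(N)})$, and decomposing this inequality by $\BD{j}_1$-slices: the interior slices contribute $(2N-2r+1)^{n-1}J_1(\tilde u_{p,q})$ thanks to the periodicity of $\tilde u_{p,q}$ and $v$ in $\BD{i}_2,\dots,\BD{i}_n$, while the two boundary layers in those directions are $O(N^{n-2})$ by the uniform bound $v-1\leq w^{(N)}\leq v+1$ and the $C^2$ control on $s$. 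Dividing by $N^{n-1}$ and letting $N\to\infty$ yields $J_1(\tilde u_{p,q})\geq 0$. Finally, $|J_{1;p,q}(u)-J_1(\tilde u_{p,q})|$ is supported on two slabs of width $O(r)$ near $\BD{j}_1=p,q$ and vanishes as $p\to-\infty,q\to\infty$ because $u\to v$; passing to $\liminf$ gives $J_1(u)\geq 0$, so $c_1(v)\geq 0$.

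\textbf{Uniqueness.} Given $u\in\MM_1(v)$, set $\phi=\max(u,v)$ and $\psi=\min(u,v)$; both lie in $\Gamma_1(v)$. Lemma \ref{lem:2.6miao} applied to $\sum_{j=p}^{q}S_{\BD{T}_j}$ gives $J_{1;p,q}(\phi)+J_{1;p,q}(\psi)\leq J_{1;p,q}(u)+J_{1;p,q}(v)=J_{1;p,q}(u)$, and passing to $\liminf$ (using that $J_1(v)=0$ exactly) yields $J_1(\phi)+J_1(\psi)\leq J_1(u)=0$; since both terms are $\geq 0$ by the lower bound, both are minimizers. An application of Lemma \ref{lem:2.64} with a slightly nudged minimizing sequence (to handle the finitely many lattice points where $u$, $\phi$, or $\psi$ might meet the obstacles $v\pm 1$ — inoffensive because $u\to v$) shows $u$, $\phi$, $\psi$ all solve \eqref{eq:PDE}. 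Corollary \ref{cor:bijiao} then forces $u$ and $v$ to be comparable. Assuming $u\geq v$ (the opposite case is symmetric), Lemma \ref{lem:unknown} gives either $u=v$ or $u>v$ strictly on $\Z^n$. In the latter case $u-v$ is positive, bounded, periodic in $\BD{i}_2,\dots,\BD{i}_n$ and tends to $0$ as $|\BD{i}_1|\to\infty$, so it attains a positive maximum $M$ at some $\BD{i}^*$. Subtracting the Euler--Lagrange equations at $\BD{i}^*$ and expanding by the mean value theorem produces $\sum_{\BD{k}}a_{\BD{k}}(u(\BD{k})-v(\BD{k}))=0$, where by (S\ref{eq:S3}) one has $a_{\BD{k}}\leq 0$ for $\BD{k}\neq\BD{i}^*$ with strict inequality at neighbors of $\BD{i}^*$, and by (S\ref{eq:S1}) one has $\sum_{\BD{k}}a_{\BD{k}}=0$. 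Rewriting as $\sum_{\BD{k}\neq\BD{i}^*}(-a_{\BD{k}})\bigl(M-(u(\BD{k})-v(\BD{k}))\bigr)=0$, each nonnegative summand vanishes, so $u-v=M$ at every neighbor of $\BD{i}^*$; iterating through the connectedness of $\Z^n$ propagates $u-v\equiv M$, contradicting $u-v\to 0$. Hence $u=v$.

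\textbf{Main obstacle.} The most delicate step is the lower bound: two rounds of boundary corrections must be controlled. The $\BD{i}_1$-truncation errors vanish thanks to $u\to v$, while the $(n-1)$ periodic-direction boundary terms, of order $O(N^{n-2})$, need the uniform $\hat\Gamma_1$-bounds together with (S\ref{eq:S2}) for coercivity. A secondary nuisance is verifying the perturbation condition in Lemma \ref{lem:2.64} at lattice points where a minimizer could in principle touch $v\pm 1$; this is dispatched by a small modification of the minimizing sequence away from the asymptotic regime.
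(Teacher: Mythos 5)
The paper itself defers the proof of this lemma to \cite[Theorem~3.11]{LC} and gives no in-text argument, so I assess your proposal on its own terms. It is correct: the overall scheme — upper bound $c_1(v)\leq 0$ by testing $J_1$ at $v$; lower bound $J_1\geq 0$ on $\Gamma_1(v)$ by truncation against the minimality of $v$; uniqueness via comparison plus a strong maximum principle — is the natural one and every step goes through.

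Two places where your route differs from the machinery the paper otherwise leans on, without affecting correctness. For the lower bound you truncate twice, first in $\BD{i}_1$ and then in a finite box in the periodic directions, so the competitor has genuinely compact support and the definition of minimality applies; the second truncation costs an $O(N^{n-2})$ boundary term which you kill by dividing by $N^{n-1}$. The paper's own shortcut for precisely such an estimate (see the proof of \eqref{eq:claim789541}) is to take the $\BD{i}_1$-truncation and extend it periodically in $\BD{i}_1$, then invoke \cite[Proposition~3.1]{LC}, which gives $J_{1;p-r,q+r}\geq 0$ directly for $\BD{i}_1$-periodic configurations; this avoids the second truncation and the $N\to\infty$ limit. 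Your version is more self-contained but in effect re-derives that cited lemma. For uniqueness, once you have $u$, $\max(u,v)$, $\min(u,v)$ all solving \eqref{eq:PDE} and hence $u$ comparable to $v$, your finish is a hands-on maximum principle: expand the Euler--Lagrange difference by the fundamental theorem of calculus, use (S\ref{eq:S1}) to make the coefficients sum to zero and (S\ref{eq:S3}) for nonpositivity off the diagonal (strict at nearest neighbors), then propagate. This is sound, but it is essentially the computation underlying Lemma~\ref{lem:unknown}. A route that stays entirely inside the stated lemmas, mirroring the $\tau_{-1}^{1}$-comparison in the proof of Proposition~\ref{prop:6.13}, is: $\tau_{-1}^{1}u\in\MM_1(v)$ as well, so by Lemma~\ref{lem:2.6miao} and the same minimizing argument $\max(u,\tau_{-1}^{1}u)$ and $\min(u,\tau_{-1}^{1}u)$ solve \eqref{eq:PDE}; Corollary~\ref{cor:bijiao} then forces $u-v$ to be monotone in $\BD{i}_1$, and since $u-v\to 0$ at both ends it vanishes identically. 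Either way $\MM_1(v)=\{v\}$ follows.
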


We are now in a position to state the constrained variational problem.
The case of $2$ transition solution will be treated in detail
and $k$ ($k>2$) transition solution will be sketched in Section \ref{chap:8}.
Recall that under the gap conditions \eqref{eq:*0}-\eqref{eq:*1}, $\MM_1(v_0,w_0)$ and $\MM_1(w_0,v_0)$ are ordered sets.
If we set
\begin{equation*}
  \rho_{-}(u)=\norm{u-v_0}_{\BD{T}_0} \quad\textrm{ and }\quad \rho_{+}(u)=\norm{u-w_0}_{\BD{T}_0},
\end{equation*}
then $\rho_{-}, \rho_{+}$ are monotone on $\MM_1(v_0,w_0)$ and $\MM_1(w_0,v_0)$.
Let $\bar{\rho}=\norm{w_0-v_0}_{\BD{T}_0}$.
By \eqref{eq:*1}, $\MM_1(v_0,w_0)$ and $\MM_1(w_0,v_0)$ do not contain a continuum of members,
so we can take
$\rho_i \in (0,\bar{\rho})$, $1\leq i \leq 4$, satisfying
\begin{equation}\label{eq:6.2}
\begin{split}
  \rho_1\not\in \rho_{-}(\MM_1(v_0,w_0)), & \quad\quad\rho_2\not\in\rho_{+}(\MM_1(v_0,w_0)), \\
   \rho_3\not\in \rho_{+}(\MM_1(w_0,v_0)), & \quad\quad\rho_4\not\in\rho_{-}(\MM_1(w_0,v_0)).
\end{split}
\end{equation}
For $l\in\N$, let $\BD{m}=(\BD{m}_1,\BD{m}_2,\BD{m}_3,\BD{m}_4)\in \Z^4$ satisfy
\begin{equation}\label{eq:6.3}
\BD{m}_1<\BD{m}_2<\BD{m}_2+2l<\BD{m}_3<\BD{m}_4.
\end{equation}

The functional space of the constrained variational minimization problem is defined as follows.
Let
\begin{equation}\label{eq:6.4}
Y_{\BD{m},l}:= Y_{\BD{m},l}(v_0,w_0):= \{u\in \hat{\Gamma}_1(v_0,w_0)\,|\, u \textrm{ satisfies } \eqref{eq:6.5}-\eqref{eq:6.6}\},
\end{equation}
where
\begin{equation}\label{eq:6.5}
  \left\{
    \begin{array}{ll}
(a) \quad\rho_{-}(\tau_{-i}^{1}u)&\leq \rho_1,\quad \BD{m}_1-l\leq i\leq \BD{m}_1-1, \\
(b) \quad\rho_{+}(\tau_{-i}^{1}u)&\leq \rho_2, \quad \BD{m}_2\leq i\leq \BD{m}_2+l-1, \\
(c) \quad\rho_{+}(\tau_{-i}^{1}u)&\leq \rho_3,\quad \BD{m}_3-l\leq i\leq \BD{m}_3-1, \\
(d) \quad\rho_{-}(\tau_{-i}^{1}u)&\leq \rho_4, \quad \BD{m}_4\leq i\leq \BD{m}_4+l-1,
    \end{array}
  \right.
\end{equation}
and
\begin{equation}\label{eq:6.6}
  \norm{u-v_0}_{\BD{T}_i}\to 0,\quad |i|\to \infty. 
\end{equation}
Set
\begin{equation}\label{eq:6.7}
  b_{\BD{m},l}:= b_{\BD{m},l}(v_0,w_0):=\inf_{u\in Y_{\BD{m},l}}J_{1}(u).
\end{equation}

We restate the main result in Section \ref{sec:00003} as some theorems. The first and the simplest theorem is:
\begin{thm}\label{thm:6.8}
Suppose $s\in C^2(\R^{B^{r}_{\BD{0}}},\R)$ satisfies (S\ref{eq:S1})-(S\ref{eq:S3}).
Assume that \eqref{eq:*0} and \eqref{eq:*1} hold.
Then for each sufficiently large $l\in\N$, there is a $U=U_{\BD{m},l}\in Y_{\BD{m},l}$ such that $J_1(U)=b_{\BD{m},l}$.
Moreover, $U$ is a solution of \eqref{eq:PDE} provided that $\BD{m}_2-\BD{m}_1$ and $\BD{m}_4-\BD{m}_3$ are large enough.
\end{thm}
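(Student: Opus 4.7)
The plan is a four-stage constrained minimization in the Moser--Bangert--Rabinowitz--Stredulinsky spirit.

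First, I would establish $b_{\BD{m},l} < \infty$ by exhibiting a test configuration in $Y_{\BD{m},l}$. Pick basic heteroclinics $U^{(1)} \in \MM_1(v_0,w_0)$ and $U^{(2)} \in \MM_1(w_0,v_0)$, take a translate $\tau_{p_1}^{1} U^{(1)}$ performing its $v_0 \to w_0$ transition near $i = \BD{m}_1$ (so that window (a) of \eqref{eq:6.5} sits in its $v_0$-plateau and window (b) in its $w_0$-plateau), then splice to a translate $\tau_{p_2}^{1} U^{(2)}$ performing the reverse transition near $i = \BD{m}_3$ (handling windows (c) and (d)), and return cleanly to $v_0$ outside. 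Provided $l$ is large enough that each window sits deep in the corresponding plateau of the shifted heteroclinics, the resulting configuration lies in $Y_{\BD{m},l}$ with finite $J_1$.

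Next, given a minimizing sequence $(u_k) \subset Y_{\BD{m},l}$, Lemma \ref{lem:2.50} produces a pointwise subsequential limit $U \in \hat{\Gamma}_1(v_0, w_0)$. Each constraint in \eqref{eq:6.5} is a single-site inequality, so it passes to the pointwise limit. The asymptotic condition \eqref{eq:6.6} follows from a standard truncation/renormalization: if $U$ failed to decay to $v_0$ as $|i|\to\infty$, a max/min splicing with $v_0$ outside a large interval via Lemma \ref{lem:2.6miao} would produce a competitor with strictly smaller $J_1$, contradicting minimality and the fact that $\MM_1(v_0)=\{v_0\}$ from Lemma \ref{lem:2.72}. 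A Fatou-type lower semicontinuity of $J_1$ then yields $J_1(U) \leq b_{\BD{m},l}$, and equality follows since $U \in Y_{\BD{m},l}$.

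For the PDE claim, I would apply Lemma \ref{lem:2.64}. Fix $i \in \Z$ and consider perturbations $u_k + t\delta_{\BD{T}_i}$. If $i$ lies \emph{outside} the four constraint windows of \eqref{eq:6.5}, the constraints depend only on sites lying within those windows, so the perturbed configuration remains in $Y_{\BD{m},l}$ for all $|t| \leq t_0$; hence \eqref{eq:2.65} holds with $\epsilon_k = 0$, and $U$ satisfies \eqref{eq:PDE} at every $\BD{j}$ with $\BD{j}_1 = i$. It then remains to show the constraints are \emph{strictly} inactive at $U$ inside each window, which covers the remaining values of $i$.

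The main obstacle is this strict inactivity, and here the largeness of $l$ and of the gaps $\BD{m}_2 - \BD{m}_1$, $\BD{m}_4 - \BD{m}_3$ enters essentially. Suppose, toward a contradiction, that constraint (a) is active at some $i$, i.e.\ $\rho_-(\tau_{-i}^{1} U) = \rho_1$. Since $\rho_1 \notin \rho_-(\MM_1(v_0,w_0))$ by \eqref{eq:6.2}, the ordered family $\MM_1(v_0,w_0)$ admits adjacent $\underline{W} < \overline{W}$ with $\rho_-(\underline{W}) < \rho_1 < \rho_-(\overline{W})$. I would then compare $U$ against $\min(U, \tau_{p}^{1} \overline{W})$ or $\max(U, \tau_{p}^{1} \underline{W})$ for a shift $p$ depending on $\BD{m}$ and $l$: the largeness of $l$ lets the shifted heteroclinic cover window (a) with its transition region, while the largeness of $\BD{m}_2 - \BD{m}_1$ and $\BD{m}_4 - \BD{m}_3$ prevents the modification from disturbing windows (b), (c), (d). Combining Lemma \ref{lem:2.6miao}, Lemma \ref{lem:unknown}, and Corollary \ref{cor:bijiao}, the modification lies in $Y_{\BD{m},l}$ with strictly smaller $J_1$, contradicting $J_1(U) = b_{\BD{m},l}$. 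Symmetric arguments handle windows (b)--(d). This renormalization step, which converts a constrained minimizer into an unconstrained local minimizer and hence a genuine solution of \eqref{eq:PDE}, is the delicate heart of the proof.
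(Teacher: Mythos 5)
Your overall scaffold — build a finite-energy test function by splicing shifted heteroclinics, pass to a pointwise limit via Lemma \ref{lem:2.50}, observe the single-site constraints survive the limit, then show the constraints are eventually inactive so Lemma \ref{lem:2.64} applies — is the right skeleton and matches the paper's parts (A)--(D). But the ``delicate heart'' you flag, the strict inactivity of the constraints, is exactly where your proposal diverges from the paper and where it has a genuine gap.

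Your plan for strict inactivity is to find adjacent $\underline{W}<\overline{W}$ in $\MM_1(v_0,w_0)$ bracketing $\rho_1$, and to compare $U$ with $\min(U,\tau_p^1\overline{W})$ or $\max(U,\tau_p^1\underline{W})$ via Lemma \ref{lem:2.6miao}, Lemma \ref{lem:unknown} and Corollary \ref{cor:bijiao}, expecting a strict decrease of $J_1$. This does not go through as stated. Lemma \ref{lem:2.6miao} only gives $J_1(\max)+J_1(\min)\le J_1(U)+J_1(\tau_p^1\overline{W})$, which is never strict on its own; Corollary \ref{cor:bijiao} gives an ordered/non-ordered dichotomy for genuine solutions, but $U$ is not yet known to solve \eqref{eq:PDE} in the active window, so one cannot invoke it there; and $\tau_p^1\overline{W}$ has the wrong asymptotics ($v_0\to w_0$), so $\max(U,\tau_p^1\overline{W})\notin Y_{\BD{m},l}$ and the two pieces are not competitors in the same variational problem. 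None of the quoted tools by themselves produce the needed strict energy drop or contradiction. The paper's route is quantitative and different: it shows in Proposition \ref{prop:6.74} the energy gap $d_1(v_0,w_0)>c_1(v_0,w_0)$, then, if a constraint were active, splits $U$ at a site $q$ in the third window where (by part (A)) $\norm{U-w_0}_{X_q}\le\sigma$, into $\Phi$ with $\tau_{-q}^1\Phi\in\Lambda_1(v_0,w_0)$ and $\Psi\in\Gamma_1(w_0,v_0)$, yielding $J_1(U)\ge d_1(v_0,w_0)+c_1(w_0,v_0)-\kappa_0(\sigma)$, while a clean concatenation of basic heteroclinics gives $J_1(U)\le c_1(v_0,w_0)+c_1(w_0,v_0)+\epsilon+\kappa_0(\sigma)$; for $\BD{m}_2-\BD{m}_1$, $\BD{m}_4-\BD{m}_3$ large and $\sigma$ small this contradicts $d_1>c_1$. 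This is where the largeness of the gaps actually enters, and it is the mechanism your proposal lacks.

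Two smaller points. For the asymptotic condition \eqref{eq:6.6}, ``truncation plus $\MM_1(v_0)=\{v_0\}$'' is too vague: the paper first uses Proposition \ref{prop:6.53} (requiring that $U$ already solve \eqref{eq:PDE} for $\BD{i}_1\ge\BD{m}_4+l$, which needs part (A) to be done first) to conclude $U\to v_0$ or $w_0$, and then eliminates $w_0$ by an explicit energy argument resting on the lower bound $\beta(\gamma)>0$ of Proposition \ref{prop:6.13}; a mere splicing with $v_0$ does not by itself produce a strictly better competitor. Finally, when you perturb $u_k+t\delta_{\BD{T}_i}$ outside the constraint windows, the perturbation can leave $\hat{\Gamma}_1(v_0,w_0)$; the paper first truncates by $\max(\cdot,v_0)$ (and uses Lemma \ref{lem:2.72}) before comparing, a step your proposal omits.
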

\begin{rem}\label{rem:6.10}
Enlarging $l, \BD{m}_2-\BD{m}_1$ and $\BD{m}_4-\BD{m}_3$ gives infinitely many $2$ transition solutions of \eqref{eq:PDE}.
Of course, these solutions are geometrically distinct.
\end{rem}

We postpone the proof of Theorem \ref{thm:6.8} until section \ref{chap:7}.
The other theorems for proving the main result in Section \ref{sec:00003} will be stated in Section \ref{chap:8}.
The remainder of this section is devoted to give some preliminaries.
Set
\[
c_1(v_0,w_0)=\inf_{u\in\Gamma_1(v_0,w_0)}J_1(u), \quad \textrm{and}\quad c_1(w_0,v_0)=\inf_{u\in\Gamma_1(w_0,v_0)}J_1(u).
\]

\begin{lem}\label{lem:6.11}
Suppose \eqref{eq:*0} holds. Then we have
$c_1(v_0,w_0)+c_1(w_0,v_0)>0$.
\end{lem}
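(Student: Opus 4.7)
The plan is to combine the lattice subadditivity Lemma~\ref{lem:2.6miao} with the min/max construction, and then invoke the uniqueness statement in Lemma~\ref{lem:2.72} together with the strong comparison principle in Lemma~\ref{lem:unknown} to extract the strict inequality. Fix minimizers $\phi\in\MM_1(v_0,w_0)$ and $\psi\in\MM_1(w_0,v_0)$, whose existence is guaranteed by \cite{LC}, so that $J_1(\phi)=c_1(v_0,w_0)$ and $J_1(\psi)=c_1(w_0,v_0)$. Set $m:=\min(\phi,\psi)$ and $M:=\max(\phi,\psi)$ pointwise.

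The first step is to verify $m\in\Gamma_1(v_0)$ and $M\in\Gamma_1(w_0)$. By (S\ref{eq:S1}) the translate $v_0+1$ also lies in $\MM_0$, and the adjacency of $v_0<w_0$ in $\MM_0$ forces $w_0\le v_0+1$. Consequently $v_0\le m\le M\le w_0\le v_0+1$, placing $m\in\hat{\Gamma}_1(v_0-1,v_0+1)$ and $M\in\hat{\Gamma}_1(w_0-1,w_0+1)$. The asymptotics $\phi\to v_0$ at $-\infty$, $\phi\to w_0$ at $+\infty$, $\psi\to w_0$ at $-\infty$, $\psi\to v_0$ at $+\infty$ yield $m\to v_0$ and $M\to w_0$ at both ends, so $m$ and $M$ belong to the respective admissible classes.

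Next, Lemma~\ref{lem:2.6miao} applied on each $[p,q]$ gives
\[
J_{1;p,q}(m)+J_{1;p,q}(M)\le J_{1;p,q}(\phi)+J_{1;p,q}(\psi).
\]
Since $\phi\in\Gamma_1(v_0,w_0)$ and $\psi\in\Gamma_1(w_0,v_0)$ have finite $J_1$, Lemma~\ref{lem:23}(2) gives that the right-hand side converges to $c_1(v_0,w_0)+c_1(w_0,v_0)$ as $p\to-\infty$, $q\to\infty$; passing to $\liminf$ and using its superadditivity on the left yields
\[
J_1(m)+J_1(M)\le c_1(v_0,w_0)+c_1(w_0,v_0).
\]

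To close the argument, invoke Lemma~\ref{lem:2.72}: $c_1(v_0)=c_1(w_0)=0$ and the uniqueness of minimizers gives $J_1>0$ on $\Gamma_1(v_0)\setminus\{v_0\}$ and on $\Gamma_1(w_0)\setminus\{w_0\}$. Since $\phi,\psi\ne v_0$ by their asymptotic behavior, Lemma~\ref{lem:unknown} applied to the solution pairs $(v_0,\phi)$ and $(v_0,\psi)$ yields $\phi>v_0$ and $\psi>v_0$ strictly everywhere, hence $m>v_0$ strictly, so $m\ne v_0$ and $J_1(m)>0$. The symmetric argument against $w_0$ gives $M<w_0$ strictly, so $M\ne w_0$ and $J_1(M)>0$. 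Combining, $c_1(v_0,w_0)+c_1(w_0,v_0)\ge J_1(m)+J_1(M)>0$. The one step requiring care is the passage from the finite-interval Lemma~\ref{lem:2.6miao} to the $J_1$ inequality, handled by Lemma~\ref{lem:23}(2) on the right together with the lower bound in Lemma~\ref{lem:23}(1) and superadditivity of $\liminf$ on the left; the rest is a direct application of the structural lemmas already in place.
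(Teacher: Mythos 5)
Your proof is correct and takes essentially the same approach as the paper's, which is a one-line reference to \cite[Lemma 6.11]{RS} plus the subadditivity inequality $J_1(\Phi)+J_1(\Psi)\leq J_1(V)+J_1(W)$ obtained from Lemmas \ref{lem:2.6miao} and \ref{lem:23}; your min/max construction of $m,M$ and the passage to $J_1$ via superadditivity of $\liminf$ reconstruct exactly that referenced argument. The use of Lemma \ref{lem:2.72} for strict positivity and Lemma \ref{lem:unknown} to rule out $m=v_0$, $M=w_0$ are the intended steps and are carried out correctly.
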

\begin{proof}
By \eqref{eq:*0} and \cite[Theorem 3.13, Remark 3.14]{LC}, $\MM_1(v_0,w_0)$ and $\MM_1(w_0,v_0)$ are not empty.
Then the proof of Lemma \ref{lem:6.11} is the same as \cite[Lemma 6.11]{RS} except for \[J_1(\Phi)+J_1(\Psi)\leq J_1(V)+J_1(W),\]
which follows from Lemmas \ref{lem:2.6miao} and \ref{lem:23}.
\end{proof}

For $k\in\Z$, set \[ X_k:= \cup_{i=-r}^{r}\BD{T}_{k+i}. \]
The following proposition is very useful in comparison arguments.
\begin{prop}\label{prop:6.13}
Suppose \eqref{eq:*0} holds. For any $\gamma\in (0,\bar{\rho})$,
there is a $\beta=\beta(\gamma)>0$ 
such that $J_1(u)\geq \beta$ for any
\[
\begin{split}
&u\in \{u\in\Gamma_1(v_0)\cap\hat{\Gamma}_1(v_0,w_0)\,|\, \norm{u-v_0}_{X_0}\geq \gamma\}, \\
\textrm{or  }&u\in \{u\in\Gamma_1(w_0)\cap\hat{\Gamma}_1(v_0,w_0)\,|\, \norm{u-w_0}_{X_0}\geq \gamma\}.
\end{split}
\]
\end{prop}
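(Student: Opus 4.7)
The plan is to proceed by contradiction, using the rigidity $c_1(v_0)=0$, $\MM_1(v_0)=\{v_0\}$ of Lemma~\ref{lem:2.72}. By the obvious symmetry between $v_0$ and $w_0$ in the statement, it suffices to treat the first case; the second follows verbatim after swapping their roles. Set
\[
\MY_v := \{u\in\Gamma_1(v_0)\cap\hat{\Gamma}_1(v_0,w_0)\,|\,\norm{u-v_0}_{X_0}\geq\gamma\},
\]
and suppose, for contradiction, that $\inf_{u\in\MY_v}J_1(u)=0$; fix a minimizing sequence $(u_k)\subset\MY_v$ with $J_1(u_k)\to 0$.

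First I would invoke Lemma~\ref{lem:2.50} with $\MY=\MY_v$ to extract, along a subsequence, a pointwise limit $U\in\hat{\Gamma}_1(v_0,w_0)$. The inclusion $\Gamma_1(v_0)\subset\hat{\Gamma}_1(v_0-1,v_0+1)$ propagates to $v_0\leq U\leq v_0+1$, and because $X_0$ is a fixed finite subset of $\Z^n$, the constraint $\norm{U-v_0}_{X_0}\geq\gamma$ passes to the limit, so in particular $U\neq v_0$.

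The crux is to upgrade $U$ to an element of $\Gamma_1(v_0)$ with $J_1(U)\leq 0$, for then Lemma~\ref{lem:2.72} forces $U\in\MM_1(v_0)=\{v_0\}$, contradicting $U\neq v_0$. Each partial sum $J_{1;p,q}$ depends on only finitely many lattice values and is therefore continuous under pointwise convergence, so $J_{1;p,q}(U)=\lim_k J_{1;p,q}(u_k)$. Combined with the two-sided bound $-K_1\leq J_{1;p,q}(u_k)\leq J_1(u_k)+2K_1$ from Lemma~\ref{lem:23}(1), this yields finite partial-sum bounds on $U$. To trade these for a bound on $J_1(U)$ itself and to secure the asymptotic decay at $\pm\infty$, I would replace $(u_k)$ by a truncated minimizing sequence $(\tilde u_k)$: for each $k$, Lemma~\ref{lem:23}(2) supplies $p_k\to-\infty$ and $q_k\to\infty$ beyond which $\norm{u_k-v_0}_{\BD{T}_i}$ is as small as desired and $J_{1;p_k,q_k}(u_k)=J_1(u_k)-o(1)$; define $\tilde u_k$ to coincide with $u_k$ on $[\BD{T}_{p_k},\BD{T}_{q_k}]$ and with $v_0$ outside a width-$r$ collar, so that each $\tilde u_k$ is a compactly supported perturbation of $v_0$ lying in $\Gamma_1(v_0)\cap\MY_v$, and, by continuity of $s$ on bounded sets, the collar correction is $o(1)$, hence $J_1(\tilde u_k)\to 0$. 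Choosing $p_k,q_k$ to grow slowly enough and taking a diagonal pointwise extraction then yields a limit in $\Gamma_1(v_0)$ with $J_1\leq 0$ still satisfying $\norm{\cdot-v_0}_{X_0}\geq\gamma$, producing the desired contradiction.

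The main obstacle is precisely the transfer of the asymptotic decay condition $u-v_0\to 0$ at $\pm\infty$ to the pointwise limit: the decay rate of $u_k-v_0$ may a priori depend on $k$, so the compactness of Lemma~\ref{lem:2.50} alone does not place the limit in $\Gamma_1(v_0)$. The truncation device above, leaning on the two-sided partial-sum control from Lemma~\ref{lem:23}(1), is the mechanism I plan to use to bypass this difficulty.
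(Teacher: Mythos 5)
Your reduction to a contradiction via the rigidity $\MM_1(v_0)=\{v_0\}$ is a reasonable starting instinct, and the first two steps (extracting the pointwise limit $U$ by Lemma~\ref{lem:2.50}, passing $\norm{\cdot-v_0}_{X_0}\geq\gamma$ to the limit) are fine. But the truncation device you propose to close the gap does not work, and the gap is real. Pointwise convergence is a purely local mode of convergence: if $p_k\to-\infty$ and $q_k\to\infty$, then for every fixed lattice site $\BD{j}$ you eventually have $p_k<\BD{j}_1<q_k$, so $\tilde u_k(\BD{j})=u_k(\BD{j})$ for all large $k$, and the diagonal pointwise limit of $(\tilde u_k)$ is exactly the same $U$ as before. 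Choosing $p_k,q_k$ to ``grow slowly'' changes nothing so long as they still tend to $\pm\infty$; and if you freeze them, the collar correction is no longer $o(1)$ because the decay of $u_k-v_0$ at those shells is not uniform in $k$ (which is the very obstruction you identified). So the truncation puts each $\tilde u_k$ in $\Gamma_1(v_0)$, but it does not put the limit there. Moreover, even granting $U\in\Gamma_1(v_0)$, the inference $J_1(U)\leq 0$ would require a lower-semicontinuity statement you have not established: Lemma~\ref{lem:2.50} only gives $J_1(U)\leq c(\MY)+1+2K_1$, not $J_1(U)\leq c(\MY)$.

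A telltale sign that the argument is incomplete is that it nowhere uses the hypothesis \eqref{eq:*0}, while the proposition explicitly requires it. The paper's proof needs it in an essential way: rather than forcing the limit into $\Gamma_1(v_0)$, it accepts that $P$ may fail to decay at one end, shows that $P$ solves \eqref{eq:PDE} (via the comparison/truncation argument feeding into Lemma~\ref{lem:2.64}), applies the same argument to $\max(P,\tau_{-1}^1P)$ and $\min(P,\tau_{-1}^1P)$ so that Corollary~\ref{cor:bijiao} orders $P$ against its shift, deduces that $P$ is asymptotic to $w_0$ as $\BD{i}_1\to\infty$ (using \cite[Corollary 3.6]{LC}), and then splices $u_k$ with $w_0$ into a $\Gamma_1(v_0,w_0)$-piece and a $\Gamma_1(w_0,v_0)$-piece; Lemma~\ref{lem:6.11} (which requires \eqref{eq:*0}) then gives $J_1(u_k)\geq c_1(v_0,w_0)+c_1(w_0,v_0)-\kappa(\epsilon)>0$, contradicting $J_1(u_k)\to 0$. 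To repair your argument you would need to replace the truncation step by an argument of this type that controls the behavior of $P$ at infinity and converts it into an energy lower bound.
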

\begin{proof}
We only prove the case of $u\in\Gamma_1(v_0)\cap \hat{\Gamma}_1(v_0,w_0)$ since the other case can be proved similarly.
Define
\begin{equation*}
  \MY:=\{u\in\Gamma_1(v_0)\cap \hat{\Gamma}_1(v_0,w_0)\,|\, \norm{u-v_0}_{X_0}\geq \gamma\}
\end{equation*}
and set $c(\MY):=\inf_{u\in\MY}J_1(u)$.
Then by Lemma \ref{lem:2.72},
 $ 0=c_1(v_0)\leq c(\MY)<\infty$,
where
\begin{equation}\label{eq:2.71}
  c_1(v_0):=\inf_{u\in \Gamma_1(v_0)}J_1(u).
\end{equation}
If $c(\MY)>0$,
set $\beta(\gamma):=c(\MY)$ and we are done.
Now suppose, by contradiction, $c(\MY)=0$.
Take $(u_k)\subset \MY$ such that $J_1(u_k)\to c(\MY)=c_1(v_0)=0$ as $k\to \infty$.
Since $\MY\subset \Gamma_1(v_0)$, Lemma \ref{lem:2.50} ensures
there are a subsequence, still denoted by $u_k$, and a
$P\in \hat{\Gamma}_1(v_0,w_0)$ such that $J_1(P)<\infty$ and
\begin{equation}\label{eq:uk}
  u_k\to P \quad \textrm{ pointwise as} \quad k\to \infty.
\end{equation}
So $\norm{P-v_0}_{X_0}\geq \gamma$.

We claim that:
\begin{equation}\label{eq:claim}
  \textrm{$P$ is a solution of \eqref{eq:PDE}}.
\end{equation}
We need to verify that the condition of Lemma \ref{lem:2.64} is satisfied.
A comparison argument as in the proof of (A) of \cite[Theorem 3.13]{LC} will be employed.
Indeed, let $\delta_{\BD{T}_i}$ be as in Lemma \ref{lem:2.64} and
\begin{equation*}
  2|t|\leq \left\{
            \begin{array}{ll}
              \min(1,v_0-w_0+1), & \textrm{ if } v_0>w_0-1 ,\\
              1, & \textrm{ if } v_0=w_0-1.
            \end{array}
          \right.
\end{equation*}
Set $\chi_k=\max(v_0, \min(u_k+t\delta_{\BD{T}_i},w_0))$.
Then $\chi_k,\min(v_0, \min(u_k+t\delta_{\BD{T}_i},w_0))\in\Gamma_1(v_0)$.
Thus
\begin{equation*}
  \begin{split}
   J_1(\chi_k) \leq &J_1(\chi_k)+J_1(\min(v_0, \min(u_k+t\delta_{\BD{T}_i},w_0)))\\
   \leq &J_1(\min(u_k+t\delta_{\BD{T}_i},w_0))\\
   \leq &J_1(\max(u_k+t\delta_{\BD{T}_i},w_0))+J_1(\min(u_k+t\delta_{\BD{T}_i},w_0))\\
  \leq &J_1(u_k+t\delta_{\BD{T}_i}),
 \end{split}
\end{equation*}
where the first and the third inequalities follow from Lemma \ref{lem:2.72}, while the second and the last inequalities follow from Lemma \ref{lem:2.6miao}.
Hence we have
\begin{equation*}
\begin{split}
 & c_1(v_0)\leq  J_1(u_k)=:c_1(v_0)+\epsilon_k\\
\leq & J_1(\chi_k)+\epsilon_k \leq  J_1(u_k+t\delta_{\BD{T}_i})+\epsilon_k,
\end{split}
\end{equation*}
where $\epsilon_k\to 0$ as $k\to \infty$.
Applying Lemma \ref{lem:2.64} shows that \eqref{eq:claim} holds.

It is easy to see that $\max(u_k,\tau_{-1}^{1}u_k), \min(u_k,\tau_{-1}^{1}u_k)\in \Gamma_1(v_0)\cap \hat{\Gamma}_1(v_0,w_0)$.
By Lemma \ref{lem:2.6miao},
\begin{equation*}
\begin{split}
  & J_1(\max(u_k,\tau_{-1}^{1}u_k))+J_1(\min(u_k,\tau_{-1}^{1}u_k))\\
\leq & J_1(u_k)+J_1(\tau_{-1}^{1}u_k)\\
= & 2J_1(u_k)\\
\to& 2c_1(v_0)=0
\end{split}
\end{equation*}
as $k\to \infty$.
Therefore $\max(u_k,\tau_{-1}^{1}u_k)$ and $\min(u_k,\tau_{-1}^{1}u_k)$ are also minimizing sequences for \eqref{eq:2.71}.
Noting that $\max(u_k,\tau_{-1}^{1}u_k)\to \max(P,\tau_{-1}^{1}P)$, $\min(u_k,\tau_{-1}^{1}u_k)\to \min(P,\tau_{-1}^{1}P)$ pointwise as $k\to \infty$,
by the arguments proving \eqref{eq:claim}, $\max(P,\tau_{-1}^{1}P)$ and $\min(P,\tau_{-1}^{1}P)$ are solutions of \eqref{eq:PDE}.
By Corollary \ref{cor:bijiao}, we have
\begin{equation*}
  (a)\,\, P=\tau_{-1}^{1}P \quad \textrm{  or  }\quad  (b)\,\, P<\tau_{-1}^{1}P \quad \textrm{  or  }\quad  (c)\,\, P>\tau_{-1}^{1}P.
\end{equation*}
If (a) is satisfied, $P\in\Gamma_0$.
Since $J_1(P)<\infty$, $J_1(P)=0$, thus $P=v_0$ or $P=w_0$.
Note $\norm{P-v_0}_{X_0}\geq \gamma$, so $P=w_0$.
Case (b) and case (c) are proved similarly, so we only prove case (b).
If (b) holds, then $P\in \hat{\Gamma}_1(v_0,w_0)\setminus \{v_0,w_0\}$.
Noting $J_1(P)<\infty$, \cite[Corollary 3.6]{LC} implies $P\in\Gamma_1(v_0,w_0)$.
Thus
\begin{equation}\label{eq:6.17}
  \norm{P-w_0}_{\BD{T}_i}\to 0, \quad \quad\textrm{as } i\to \infty.
\end{equation}
Since \eqref{eq:6.17} also holds for $P=w_0$ of case (a), to complete the proof of Proposition \ref{prop:6.13}, we shall show \eqref{eq:6.17} leads to a contradiction.

For any $\epsilon >0$, by \eqref{eq:uk} and \eqref{eq:6.17}, there is a $q=q(\epsilon)\in\N$ such that for $k\in\N$ large enough,
\begin{equation}\label{eq:6.18}
  \sum^{q+2+r}_{i=q-1-r}\norm{u_k-w_0}_{\BD{T}_i}\leq \epsilon.
\end{equation}
Set
\begin{equation*}
  g_k=\left\{
        \begin{array}{ll}
          u_k, & \BD{i}_1\leq q, \\
          w_0, & q<\BD{i}_1,
        \end{array}
      \right.
\end{equation*}
and
\begin{equation*}
  h_k=\left\{
        \begin{array}{ll}
          w_0, & \BD{i}_1\leq q, \\
          u_k, & q<\BD{i}_1.
        \end{array}
      \right.
\end{equation*}
We have $g_k\in\Gamma_1(v_0,w_0)$, $h_k\in\Gamma_1(w_0,v_0)$.
Then by \eqref{eq:6.18}, for $k$ large enough, there is a function $\kappa(\theta)$ satisfying $\kappa(\theta)\to 0$ as $\theta\to 0$ such that
\begin{equation*}
\begin{split}
0<& c_1(v_0,w_0)+c_1(w_0,v_0)\\
\leq& J_1(g_k)+J_1(h_k)\\
\leq& J_1(u_k)+\kappa(\epsilon).
\end{split}
\end{equation*}
Letting $k\to \infty$ and then $\epsilon\to 0$ leads to a contradiction.
Thus $c(\MY)>0$ and we complete the proof of Proposition \ref{prop:6.13}.
\end{proof}

As in \cite{RS}, the following proposition is important for proving Theorem \ref{thm:6.8}.

\begin{prop}\label{prop:6.27}
Suppose \eqref{eq:*0} holds and $u\in\hat{\Gamma}_1(v_0,w_0)$ with $J_1(u)\leq M<\infty$.
Then for any $\sigma>0$ and $t\in\Z$, there is an $l_0=l_0(\sigma, M)\in \N$ independent of $u$ and $t$ such that whenever $l\in\N$ and $l\geq l_0$,
\begin{equation*}
  \norm{u-\phi}_{X_i}< \sigma
\end{equation*}
for some $i=i(l,t)\in (t-l/2,t+l/2)$ and $\phi=\phi_{l,t}\in \{v_0,w_0\}$.
\end{prop}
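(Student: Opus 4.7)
The plan is to argue by contradiction using translation normalization, pointwise compactness, and the asymptotic dichotomy for finite-energy configurations. Suppose the conclusion fails for some $\sigma > 0$ and $M < \infty$. Then one produces sequences $l_k \to \infty$, $t_k \in \Z$, and $u_k \in \hat{\Gamma}_1(v_0, w_0)$ with $J_1(u_k) \leq M$ such that
\[
\norm{u_k - \phi}_{X_i} \geq \sigma \quad \text{for all } i \in (t_k - l_k/2,\, t_k + l_k/2) \text{ and } \phi \in \{v_0, w_0\}.
\]
Since $v_0, w_0 \in \MM_0$ are $1$-periodic in $\BD{e}_1$ and $S_{\BD{T}_p}(\tau^1_{-t} u) = S_{\BD{T}_{p+t}}(u)$ for $t \in \Z$, the translated configurations $U_k := \tau^1_{-t_k} u_k$ still lie in $\hat{\Gamma}_1(v_0, w_0)$ with $J_1(U_k) \leq M$, and the separation condition holds on $(-l_k/2, l_k/2)$. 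This reduces to $t_k = 0$.

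Next I would extract a pointwise limit. Because $v_0 \leq U_k \leq w_0$ pointwise and each $U_k$ is $1$-periodic in $\BD{e}_2, \ldots, \BD{e}_n$, a diagonal argument yields a subsequence converging pointwise to some $U \in \hat{\Gamma}_1(v_0, w_0)$. Each $S_{\BD{T}_j}$ depends on finitely many coordinates and $s$ is continuous, so $J_{1;p,q}(U_k) \to J_{1;p,q}(U)$ for every fixed $p \leq q$; combined with $J_{1;p,q}(U_k) \leq J_1(U_k) + 2K_1 \leq M + 2K_1$ from Lemma \ref{lem:23}(1), this gives $J_1(U) \leq M + 2K_1 < \infty$. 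On the other hand, for any fixed $i \in \Z$ we eventually have $i \in (-l_k/2, l_k/2)$, so pointwise convergence on the finite set $X_i$ upgrades the separation to $\norm{U - \phi}_{X_i} \geq \sigma$ for \emph{every} $i \in \Z$ and both $\phi \in \{v_0, w_0\}$.

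Finally, since $v_0, w_0$ are adjacent in $\MM_0$, the asymptotic dichotomy \cite[Corollary 3.6]{LC} (the same ingredient already invoked at the end of the proof of Proposition \ref{prop:6.13}) forces any $U \in \hat{\Gamma}_1(v_0, w_0)$ with $J_1(U) < \infty$ to be asymptotic to an element of $\{v_0, w_0\}$ both as $\BD{i}_1 \to -\infty$ and as $\BD{i}_1 \to \infty$. In particular there exist $i$ with $\norm{U - \phi}_{X_i} < \sigma$ for some $\phi \in \{v_0, w_0\}$, contradicting the previous paragraph. This contradiction proves the proposition.

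The main obstacle is preserving both the energy bound and the uniform $\sigma$-separation when passing to the pointwise limit: the former needs Lemma \ref{lem:23}(1) so that the finiteness of $J_1(U)$ survives the $\liminf$ through the finite truncations $J_{1;p,q}$, while the latter uses that once $i$ is trapped inside $(-l_k/2, l_k/2)$ for large $k$, pointwise convergence on the finite set $X_i$ transfers the bound to $U$. With both in hand, \cite[Corollary 3.6]{LC} closes the argument immediately; no quantitative variant of Proposition \ref{prop:6.13} is needed.
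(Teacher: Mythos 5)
The first two steps of your argument---normalizing to $t=0$, extracting a pointwise limit $U\in\hat{\Gamma}_1(v_0,w_0)$ with $J_1(U)<\infty$, and upgrading the separation to $\norm{U-\phi}_{X_i}\geq\sigma$ for all $i\in\Z$ and both $\phi\in\{v_0,w_0\}$---match the paper's proof exactly, which calls this limit $U^*$. The fatal gap is in your final step. You invoke \cite[Corollary 3.6]{LC} to conclude that \emph{any} $U\in\hat{\Gamma}_1(v_0,w_0)$ with $J_1(U)<\infty$ must be asymptotic to an element of $\{v_0,w_0\}$ as $\BD{i}_1\to\pm\infty$. That is not what the cited result gives, and it is not how the paper uses it: in the proof of Proposition \ref{prop:6.13}, Corollary 3.6 of \cite{LC} is applied to $P$ only \emph{after} $P$ has been shown to solve \eqref{eq:PDE} and, via Corollary \ref{cor:bijiao}, to satisfy the ordering $P<\tau^1_{-1}P$; it is this monotone structure together with finite energy that forces $P\in\Gamma_1(v_0,w_0)$. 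Your $U$ is the pointwise limit of an \emph{arbitrary} sequence $(u_k)$, not of a minimizing sequence, so it is not a solution of \eqref{eq:PDE} and is not comparable to its $\tau^1$-translates, and the corollary does not apply to it.

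Note also that if ``finite $J_1$ forces asymptotics'' were true for general configurations, Proposition \ref{prop:6.53} (which explicitly adds the hypothesis that $u$ solves \eqref{eq:PDE} for $\BD{i}_1\geq R$ before concluding the asymptotic behavior) would be redundant, and its proof, which invokes Proposition \ref{prop:6.27}, would become circular. The paper's own argument is long precisely because the missing structure must be manufactured: starting from $U^*$ it sets up a constrained minimization over the class $\MY$ anchored to the translation orbit $\mathcal{B}=\{\tau^1_{-j}U^*\,|\,j\in\Z\}$, extracts a minimizer $\Phi$ that \emph{is} a solution of \eqref{eq:PDE}, shows $\Phi>v_0$ and $\norm{\Phi-w_0}_{X_i}\geq\sigma/2$ for $i\geq 0$, and then produces a Birkhoff $W\in\MM_1(v_0,w_0)$ that crosses $\Phi$ while $\min(W,\Phi)$ is also a solution, contradicting Corollary \ref{cor:bijiao}. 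Your proof needs this bridge (or an equivalent one supplying solution/monotonicity structure to the limit); without it the argument does not close.
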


\begin{proof}
Suppose, by contradiction, there exist a $\sigma>0$, $t\in\Z$, and a sequence $(u_k)\subset\hat{\Gamma}_1(v_0,w_0)$ such that
\begin{equation*}
  J_1(u_k)\leq M
\end{equation*}
and
\begin{equation*}
  \norm{u_k-\phi}_{X_i}\geq \sigma
\end{equation*}
for $\phi\in \{v_0, w_0\}$ and for any $i\in(t-k,t+k)$.
Note that $\phi$ does not depend on $k$, if not, replace $(u_k)$ by a subsequence.
Since $(u_k)\subset \hat{\Gamma}_1(v_0,w_0)$, by Lemma \ref{lem:2.50},
there is a $U^*\in \hat{\Gamma}_1(v_0,w_0)$ such that up to a subsequence $u_k\to U^*$ pointwise as $k\to\infty$,
\begin{equation}\label{eq:6.31}
  -K\leq J_1(U^*)\leq M+1+2K_1,
\end{equation}
and
\begin{equation}\label{eq:6.32}
  \norm{U^*-\phi}_{X_i}\geq \sigma
\end{equation}
for all $i\in\Z$ and $\phi\in\{v_0,w_0\}$.

Take $U\in\MM_1(v_0,w_0)$ such that
\begin{equation}\label{eq:6.33}
  \norm{U-w_0}_{X_i}\leq \frac{\sigma}{6} \quad \textrm{ for any } i\geq 0.
\end{equation}
This is possible, if not, replacing $U$ by $\tau_{-j}^{1}U$ for large $j\in\N$.
Define
\begin{equation*}
  \mathcal{B}:=\{\tau_{-j}^{1}U^*\,|\,j\in\Z\},
\end{equation*}
\begin{equation*}
  \begin{split}
  \MY:=\{u\in\hat{\Gamma}_1(v_0,w_0)\,|\, &u\leq U \textrm{ and } \norm{u-g}_{\BD{T}_i}\to 0, \textrm{ as } i\to \infty \\
        & \textrm{ for some } g=g(u)\in\mathcal{B}\},
  \end{split}
\end{equation*}
and
\begin{equation}\label{eq:6.34}
  c_1(\MY):=\inf_{u\in\MY}J_1(u).
\end{equation}
To show $c_1(\MY)\in\R$, set
\begin{equation*}
  f:=\left\{
      \begin{array}{ll}
        v_0, & \BD{i}_1\leq 0, \\
        \min(U,U^*), &  \BD{i}_1\geq 1.
      \end{array}
    \right.
\end{equation*}
Obviously, $f\in\MY \neq \emptyset$.
For $1+r\leq p\leq q$, by Lemma \ref{lem:2.6miao},
\begin{equation*}
  J_{1;p,q}(f)+J_{1;p,q}(\max(U,U^*))\leq J_{1;p,q}(U)+J_{1;p,q}(U^*).
\end{equation*}
Thus %
\begin{equation*}
\begin{split}
 J_{1;p,q}(f) \leq& J_{1;p,q}(U)+J_{1;p,q}(U^*)-J_{1;p,q}(\max(U,U^*))\\
  \leq& (J_1(U)+2K_1)+(J_1(U^*)+2K_1)+K_1 \\
  \leq& (c_1(v_0,w_0)+2K_1)+(M+1+2K_1+K_1)+K_1 \\
  <&\infty,
  \end{split}
\end{equation*}
where the first and the second inequalities follow from Lemmas \ref{lem:2.6miao} and \ref{lem:23}, respectively; the third inequality follows from the choice of $U$ and \eqref{eq:6.31}.
So by Lemma \ref{lem:23},
\[-K_1\leq c_1(\MY)\leq J_1(f)<\infty.\]

The idea to obtain a contradiction is that we can use variational problem \eqref{eq:6.34} to construct a solution of \eqref{eq:PDE}, say $\Phi$.
Then choose $W\in \MM_1(v_0,w_0)$ such that $\Phi$ and $W$
are `cross', that is, there are $\BD{i},\BD{j}\in\Z^n$, with $\BD{i}\neq\BD{j}$, such that $\Phi(\BD{i})\leq W(\BD{i})$, $\Phi(\BD{j})>W(\BD{j})$.
The contradiction lies in that we can prove $\min(\Phi,W)$ also is a solution of \eqref{eq:PDE}, contrary to Corollary \ref{cor:bijiao}.

Now
take a minimizing sequence, say $(\phi_k)$, for \eqref{eq:6.34} satisfying $J_1(\phi_k)\leq c_1(\MY)+1$. Then for any $k\in\N$, there exist an $s_k\in\N$ and a $g_k\in\mathcal{B}$ such that for $s\geq s_k$,
\begin{equation}\label{eq:6.36}
  \norm{\phi_k-g_k}_{X_s}\leq \frac{\sigma}{6}.
\end{equation}
To compare the desired solutions $\Phi,W$, we ask $\Phi$, obtained by minimization method, satisfies some uniform condition (see \eqref{eq:6.45} below).
Thus translating $\phi_k$ by $\tau_{-s_k}^{1}$, but $\tau_{-s_k}^{1}\phi_k$ may be not contained in $\MY$.
To overcome this difficulty, we truncate $\tau_{-s_k}^{1}\phi_k$ obtaining another minimizing sequence for \eqref{eq:6.34}.
Set $\psi_k=\max(\tau_{-s_k}^{1}\phi_k, U)$ and $\chi_k=\min(\tau_{-s_k}^{1}\phi_k, U)$.
It is easy to see that $\psi_k\in \Gamma_1(v_0,w_0)$ and $\chi_k\in \hat{\Gamma}_1(v_0,w_0)$, $\chi_k\leq U$.
Noticing that
\begin{equation*}
  \norm{\chi_k-\tau_{-s_k}^{1}g_k}_{\BD{T}_i}\leq \left\{
                                                    \begin{array}{ll}
                                                      \norm{\tau_{-s_k}^{1}\phi_k-\tau_{-s_k}^{1}g_k}_{\BD{T}_i}, & \textrm{ if } \tau_{-s_k}^{1}\phi_k (\BD{T}_i)\leq U(\BD{T}_i) , \\
                                                      \norm{\tau_{-s_k}^{1}\phi_k-\tau_{-s_k}^{1}g_k}_{\BD{T}_i}, & \textrm{ if } \tau_{-s_k}^{1}g_k (\BD{T}_i)\leq U(\BD{T}_i)< \tau_{-s_k}^{1}\phi_k (\BD{T}_i),\\
                                                      \norm{U-w_0}_{\BD{T}_i}, & \textrm{ if } U(\BD{T}_i)< \min\{\tau_{-s_k}^{1}g_k (\BD{T}_i), \tau_{-s_k}^{1}\phi_k (\BD{T}_i)\},
                                                    \end{array}
                                                  \right.
\end{equation*}
we have
\begin{equation}\label{eq:6.38}
  \norm{\chi_k-\tau^{1}_{-s_k}g_k}_{\BD{T}_i}\leq 2\norm{\tau_{-s_k}^{1}\phi_k-\tau_{-s_k}^{1}g_k}_{\BD{T}_i}+\norm{U-w_0}_{\BD{T}_i}\to 0
\end{equation}
as $i\to\infty$.
Thus $\chi_k\in\MY$.

We claim:
\begin{equation}\label{eq:6.39}
  J_1(\psi_k)+J_1(\chi_k)\leq J_1(\phi_k)+J_1(U).
\end{equation}
Note we can not take limit in
Lemma \ref{lem:2.6miao} since $\chi_k\not\in \Gamma_1(v_0,w_0)$. We need caution to prove \eqref{eq:6.39}.
By Lemma \ref{lem:2.6miao}, for any $p<q\in\Z$,
\begin{equation}\label{eq:6.40}
  J_{1;p,q}(\psi_k)+J_{1;p,q}(\chi_k)\leq J_{1;p,q}(\tau_{-s_k}^{1}\phi_k)+J_{1;p,q}(U).
\end{equation}
By Lemma \ref{lem:23},
\begin{equation}\label{eq:jia}
\begin{split}
  &J_{1;p,q}(\tau_{-s_k}^{1}\phi_k)\\
\leq & J_1(\tau_{-s_k}^{1}\phi_k)+2K_1\\
=& J_1(\phi_k)+2K_1\\
\leq & c_1(\MY)+1+2K_1.
\end{split}
\end{equation}
Therefore \eqref{eq:6.40} and \eqref{eq:jia}
imply $J_1(\psi_k), J_1(\chi_k)<\infty$.
Taking $p_i\to -\infty, q_i\to \infty$ as $i\to\infty$ such that $J_{1;p_i,q_i}(\tau_{-s_k}^{1}\phi_k)\to J_1(\tau_{-s_k}^{1}\phi_k)=J_1(\phi_k)$ as $i\to \infty$,
Lemma \ref{lem:23} gives
\begin{equation*}
\begin{split}
  J_1(\chi_k)\leq & \liminf_{i\to \infty}J_{1;p_i,q_i}(\chi_k)\\
\leq & J_1 (\phi_k)+J_1(U)-J_1(\psi_k),
\end{split}
\end{equation*}
i.e., \eqref{eq:6.39} holds.

Since $\psi_k\in\Gamma_1(v_0,w_0)$, $J_1(U)=c_1(v_0,w_0)\leq J_1(\psi_k)$.
Thus by \eqref{eq:6.39},
\begin{equation}\label{eq:6.44}
  J_1(\chi_k)\leq J_1(\phi_k).
\end{equation}
Hence $(\chi_k)$ is a modified minimizing sequence of \eqref{eq:6.34}.
Thus by Lemma \ref{lem:2.50},
there is a $\Phi\in\hat{\Gamma}_1(v_0,w_0)$ such that $\chi_k\to\Phi$ (up to a subsequence) pointwise as $k\to\infty$.
As in \eqref{eq:claim}, $\Phi$ is a solution of \eqref{eq:PDE}.
Moreover, $\Phi>v_0$.
If not, by Lemma \ref{lem:unknown} and $\Phi\geq v_0$, $\Phi=v_0$.
Thus $\chi_k=\min(\tau_{-s_k}^{1}\phi_k,U)\to v_0$ pointwise as $k\to \infty$.
But $U>v_0$, so for all large $k$,
\begin{equation}\label{eq:6.51}
\norm{\tau_{-s_k}^{1}\phi_k-v_0}_{X_0}=\norm{\phi_k-v_0}_{X_{s_k}}\leq  \frac{\sigma}{3}.
\end{equation}
Hence for all large $k$, one have the following contradiction:
\[
\sigma  \leq \norm{g_k -v_0}_{X_{s_k}}  \leq \norm{g_k-\phi_k}_{X_{s_k}}+\norm{\phi_k-v_0}_{X_{s_k}}\leq \frac{\sigma}{6}+\frac{\sigma}{3} ,
\]
where the first inequality follows from \eqref{eq:6.32} and the last inequality follows from \eqref{eq:6.36}, \eqref{eq:6.51}.
Thus $\Phi>v_0$.

We claim that $\Phi$ satisfies the following uniform condition: 
\begin{equation}\label{eq:6.45}
  \norm{\Phi-w_0}_{X_i}\geq \frac{\sigma}{2} \quad \quad \quad \textrm{for any $i\geq 0$}.
\end{equation}
Indeed,
\begin{equation}\label{eq:6.47}
\begin{split}
  &\norm{\chi_k-w_0}_{X_i} \\
 \geq  & \norm{w_0-\tau_{-s_k}^{1}g_k}_{X_i}-\norm{\chi_k-\tau_{-s_k}^{1}g_k}_{X_i}\\
\geq & \sigma-\norm{\chi_k-\tau_{-s_k}^{1}g_k}_{X_i}\\
\geq & \sigma- \Big[\sum_{\BD{j}\in X_i\cap \{U\geq \tau_{-s_k}^{1}\phi_k\}}\norm{\tau_{-s_k}^{1}\phi_k-\tau_{-s_k}^{1}g_k}_{\BD{j}}\\
&\quad\quad +\sum_{\BD{j}\in X_i\cap \{\tau_{-s_k}^{1}g_k\leq U< \tau_{-s_k}^{1}\phi_k\}} \norm{\tau_{-s_k}^{1}\phi_k-\tau_{-s_k}^{1}g_k}_{\BD{j}}
+\sum_{\BD{j}\in X_i\cap \{U<\min(\tau_{-s_k}^{1}g_k, \tau_{-s_k}^{1}\phi_k)\}}\norm{U-w_0}_{\BD{j}}\Big]\\
\geq & \sigma -(\frac{\sigma}{6}+\frac{\sigma}{6}) \\
\geq &\frac{\sigma}{2},
\end{split}
\end{equation}
where the third inequality is implied similarly to \eqref{eq:6.38} and the fourth inequality follows from \eqref{eq:6.36}, \eqref{eq:6.33}.
Letting $k\to \infty$ gives \eqref{eq:6.45}.

Since $\Phi>v_0$,
we can take $W\in\MM_1(v_0,w_0)$ such that $W<\Phi$ on $X_0$.
Noting by \cite[Theorem 3.13]{LC}, $W\in\MM_1(v_0,w_0)$ implies $W$ is a solution of \eqref{eq:PDE}.
Now we prove $\min(W, \Phi)$ also is a solution of \eqref{eq:PDE}.
As before, one can prove
$\min(W,\chi_k)\in\MY$ and $\max(W,\chi_k)\in\Gamma_1(v_0,w_0)$. Therefore as in \eqref{eq:6.44}, 
\begin{equation*}
  J_1(\min(W,\chi_k))\leq J_1(\chi_k),
\end{equation*}
and as $k\to \infty$, $\min(W,\chi_k)$ converges pointwise to $\min(W,\Phi)$, a solution of \eqref{eq:PDE}. 
Note that for $i\in\Z$ large enough, by \eqref{eq:6.45}, $W(\BD{T}_i)> \Phi(\BD{T}_i)$. 
Since $\Phi, W$ and $\min(W,\Phi)$ are solutions of \eqref{eq:PDE},
as stated earlier, we obtain a contradiction by Corollary \ref{cor:bijiao}.
\end{proof}

If $u$ in Proposition \ref{prop:6.27} is a solution of \eqref{eq:PDE}, it should be asymptotic to periodic solutions.
This fact is proved in the
following proposition.
\begin{prop}\label{prop:6.53}
Suppose \eqref{eq:*0} holds and $u\in\hat{\Gamma}_1(v_0,w_0)$ with $J_1(u)\leq M<\infty$.
If $u$ satisfies \eqref{eq:PDE}
for $\BD{i}_1\geq R$ (resp. $\BD{i}_1\leq -R$), then
$\norm{u-\phi}_{X_i}\to 0$ as $i\to \infty$ (resp. $\norm{u-\phi}_{X_i}\to 0$ as $i\to -\infty)$,
where $R\in\R$ and $\phi=v_0$ or $w_0$.
\end{prop}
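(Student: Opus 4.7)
The plan is to argue by contradiction, using Proposition \ref{prop:6.27} to locate points where $u$ is close to $v_{0}$ or $w_{0}$ and then gluing $u$ with $v_{0},w_{0}$ to extract infinitely many disjoint heteroclinic or homoclinic blocks of definite positive energy, in violation of $J_{1}(u)\leq M$. I treat only the case $\BD{i}_{1}\to+\infty$; the other direction is symmetric. Fix $\sigma>0$ small (to be shrunk) and $l\geq l_{0}(\sigma,M)$ from Proposition \ref{prop:6.27}, and set $A_{v_{0}}:=\{i:\norm{u-v_{0}}_{X_{i}}<\sigma\}$ and $A_{w_{0}}:=\{i:\norm{u-w_{0}}_{X_{i}}<\sigma\}$. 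For $\sigma<\bar{\rho}/2$ these are disjoint, and by Proposition \ref{prop:6.27} every window of length $l$ must meet $A_{v_{0}}\cup A_{w_{0}}$. If the conclusion fails, there are $\delta>0$ and $i_{k}^{v},i_{k}^{w}\to+\infty$ with $\norm{u-v_{0}}_{X_{i_{k}^{v}}}\geq\delta$ and $\norm{u-w_{0}}_{X_{i_{k}^{w}}}\geq\delta$; I further shrink $\sigma<\delta/2$.

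I then split by the behavior of $A_{v_{0}},A_{w_{0}}$ at $+\infty$. The case where both are bounded above is ruled out at once, since a window far enough to the right would then meet neither, contradicting Proposition \ref{prop:6.27}. When both are unbounded above, I interlace to pick $N_{1}<M_{1}<N_{2}<\cdots$ with $N_{k}\in A_{v_{0}}$, $M_{k}\in A_{w_{0}}$, and define $f_{k}$ equal to $v_{0}$ on $\{\BD{j}_{1}<N_{k}\}$, $u$ on $\{N_{k}\leq\BD{j}_{1}\leq M_{k}\}$, $w_{0}$ on $\{\BD{j}_{1}>M_{k}\}$, together with $h_{k}$ equal to $w_{0},u,v_{0}$ across $M_{k},N_{k+1}$. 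Then $f_{k}\in\Gamma_{1}(v_{0},w_{0})$ and $h_{k}\in\Gamma_{1}(w_{0},v_{0})$, so Lemma \ref{lem:6.11} yields $J_{1}(f_{k})+J_{1}(h_{k})\geq c_{*}:=c_{1}(v_{0},w_{0})+c_{1}(w_{0},v_{0})>0$. A short boundary computation, using the continuity of $s$ together with $J_{1,i}(v_{0})=J_{1,i}(w_{0})=0$, gives $J_{1}(f_{k})+J_{1}(h_{k})\leq J_{1;N_{k},N_{k+1}}(u)+C\sigma$; telescoping in $k=1,\ldots,K$ and applying Lemma \ref{lem:23} then yields $Kc_{*}\leq M+2K_{1}+CK\sigma$, which fails for $\sigma$ small and $K$ large.

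The remaining case, say $A_{v_{0}}$ unbounded above and $A_{w_{0}}$ bounded above (the opposite is symmetric), is the most delicate. Applying Proposition \ref{prop:6.27} to windows flanking each large $i_{k}^{v}$ produces $p_{k}^{-}<i_{k}^{v}<p_{k}^{+}$ in $A_{v_{0}}$ with $|p_{k}^{\pm}-i_{k}^{v}|<l$, and setting $g_{k}:=v_{0}$ outside $[p_{k}^{-},p_{k}^{+}]$ and $g_{k}:=u$ inside gives $g_{k}\in\Gamma_{1}(v_{0})\cap\hat{\Gamma}_{1}(v_{0},w_{0})$ (using $w_{0}\leq v_{0}+1$, which follows from the adjacency of $v_{0},w_{0}$ in $\MM_{0}$ together with the periodicity $v_{0}+1\in\MM_{0}$) with $\norm{g_{k}-v_{0}}_{X_{i_{k}^{v}}}\geq\delta$; Proposition \ref{prop:6.13} then forces $J_{1}(g_{k})\geq\beta(\delta)>0$. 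The step I expect to be the main obstacle is the gap estimate: naively, Lemma \ref{lem:23} only gives $J_{1;p_{k}^{+}+1,p_{k+1}^{-}-1}(u)\geq -K_{1}$, and accumulated over $K$ gaps this would swallow the total excursion energy $K\beta(\delta)$. My plan is to upgrade this bound by gluing $u$ on each gap with $v_{0}$ outside to produce an element of $\Gamma_{1}(v_{0})$ of $J_{1}\geq c_{1}(v_{0})=0$ (Lemma \ref{lem:2.72}), which after absorbing the boundary corrections yields the improved per-gap bound $J_{1;p_{k}^{+}+1,p_{k+1}^{-}-1}(u)\geq -C\sigma$; summing the excursion estimates then produces $K\beta(\delta)\leq M+2K_{1}+O(K\sigma)$, the sought contradiction for $\sigma$ small and $K$ large.
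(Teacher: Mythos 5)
Your approach is correct in broad strokes and reaches the right conclusion, but it is organized quite differently from the paper's proof, in a way that creates work the paper avoids. The paper makes a single unified argument: by pigeonholing over the two alternatives in Proposition~\ref{prop:6.27}, it fixes one $\phi\in\{v_0,w_0\}$ and a sequence $s_k\to\infty$ with $\norm{u-\phi}_{X_{s_k}}\le\sigma$; it then takes the bad sequence $p_i$ where $\norm{u-\phi}_{X_{p_i}}\ge\gamma$ and, after deleting $s_k$'s, arranges $p_i\in(s_i+r,s_{i+1}-r)$. The blocks $[s_i,s_{i+1})$ then \emph{tile} a half-line, and gluing $u$ over each block with $\phi$ outside gives, via Proposition~\ref{prop:6.13} together with the boundary estimate $2\kappa(\sigma)\le\tfrac23\beta(\gamma)$, the lower bound $J_{1;s_i,s_{i+1}-1}(u)\ge\tfrac13\beta(\gamma)$ on every block. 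Summing and using Lemma~\ref{lem:23} for the two tails yields $M+2K_1\ge -2K_1+\tfrac q3\beta(\gamma)$, which is the contradiction. Because the blocks tile, there are no ``gaps between excursions,'' and the issue you (correctly) single out as the main obstacle in your case (c) simply never arises. Your trichotomy on $A_{v_0},A_{w_0}$ and the use of Lemma~\ref{lem:6.11} in the oscillatory case are not needed: when both $A_{v_0}$ and $A_{w_0}$ are unbounded, choosing $\phi=v_0$ means any point near $w_0$ is automatically far from $v_0$ and serves as a $p_i$, so this case is subsumed by Proposition~\ref{prop:6.13} exactly as in the excursion case. Your fix for the per-gap lower bound (truncating $u$ to an element of $\Gamma_1(v_0)$ and invoking $c_1(v_0)=0$ from Lemma~\ref{lem:2.72}) is sound and would work, but it adds a layer of estimation the paper's tiling makes unnecessary. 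So: both proofs are valid; yours is a more case-by-case argument that uses Lemma~\ref{lem:6.11} and a separate gap lemma, while the paper's is a cleaner single argument using only Propositions~\ref{prop:6.27} and~\ref{prop:6.13} plus the two-sided bounds of Lemma~\ref{lem:23}. One further small note: your proposal never invokes the hypothesis that $u$ solves \eqref{eq:PDE} for $\BD{i}_1\ge R$; this is consistent with the paper, whose argument likewise does not actually use it, but worth being aware of. Minor bookkeeping in your writeup (e.g., the telescoping should use $J_{1;N_k,N_{k+1}-1}$ to avoid double-counting the endpoints, and the flanking windows for $p_k^{\pm}$ must be placed at distance $\gtrsim l/2>r$ from $i_k^v$ so that $X_{i_k^v}\subset[p_k^-,p_k^+]$) would need to be cleaned up, but these are not substantive gaps.
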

\begin{proof}
By Proposition \ref{prop:6.27}, for any $\sigma>0$, we obtain sequences $(t_k)\subset \Z$, 
$(s_k)=(s_k(\sigma)\subset \N$ with $t_k\to \infty$ and $s_{k+1}-s_k\geq 2r+1$, and $\phi\in\{v_0,w_0\}$
such that
\begin{equation}\label{eq:6.55}
  \norm{u-\phi}_{X_{s_k}}\leq \sigma.
\end{equation}
For such a $\phi$, we shall prove
\begin{equation}\label{eq:6.56}
  \norm{u-\phi}_{X_i}\to 0,
\end{equation}
as $i\to \infty$.
Suppose, by contradiction, \eqref{eq:6.56} fails,
then there are a $\gamma>0$ and a sequence $p_i\to\infty$ as $i\to \infty$ such that
\begin{equation}\label{eq:6.59}
  \norm{u-\phi}_{X_{p_i}}\geq \gamma.
\end{equation}
Deleting some $s_k$ if necessary, we can assume that $p_i\in (s_i+r,s_{i+1}-r)$. Define
\begin{equation*}
  h_i=\left\{
        \begin{array}{ll}
          \phi, & \BD{i}_1 \leq s_i-r-1 \quad \textrm{ or } \quad \BD{i}_1\geq s_{i+1}+r+1, \\
          u, & s_i-r\leq \BD{i}_1 \leq s_{i+1}+r,
        \end{array}
      \right.
\end{equation*}
then $h_i\in \Gamma_1(\phi)\cap \hat{\Gamma}_1(v_0,w_0)$.
Choosing $\sigma>0$ small enough, we have
\begin{equation*}
  J_{1;s_i-2r,s_i-1}(h_i)+J_{1;s_{i+1},s_{i+1}+2r+1}(h_i)\leq 2\kappa(\sigma) \leq \frac{2}{3}\beta(\gamma) ,
\end{equation*}
where $\kappa(\theta)\to 0$ as $\theta\to 0$ and $\beta(\gamma)$ is given by Proposition \ref{prop:6.13}.
Thus
\begin{equation}\label{eq:6.66}
  \begin{split}
     & J_{1;s_i,s_{i+1}-1}(u)  \\
     =&J_1(h_i)-J_{1;s_i-2r,s_i-1}(h_i)-J_{1;s_{i+1},s_{i+1}+2r+1}(h_i)\\
     \geq & \beta(\gamma)-J_{1;s_i-2r,s_i-1}(h_i)-J_{1;s_{i+1},s_{i+1}+2r+1}(h_i)\\
     \geq & \beta(\gamma)-\frac{2}{3}\beta(\gamma)\\
      =& \frac{1}{3}\beta(\gamma).
  \end{split}
\end{equation}

Suppose that $s_i>R+r$ for $i\geq i_0$.
We have the following contradiction:
\begin{equation*}
\begin{split}
  M\geq &J_1(u) \\
  =&J_{1;-\infty,s_{i_0 -1}}(u)+\sum_{j=0}^{q-1}J_{1;s_{i_0+j}, s_{i_0+j+1}-1}(u)+J_{1;s_{i_0}+q,\infty}(u)\\
  \geq & -K_1 +\sum_{j=0}^{q-1}J_{1;s_{i_0+j}, s_{i_0+j+1}-1}(u)-K_1  \\
  \geq & -2K_1 +\frac{q}{3}\beta(\gamma) \\
  \to & \infty  \quad\textrm{as } q\to \infty,
  \end{split}
\end{equation*}
where the third inequality follows from Lemma \ref{lem:23} and the last inequality follows from \eqref{eq:6.66}.
Thus \eqref{eq:6.56} holds, and this proves Proposition \ref{prop:6.53}.
\end{proof}

We also need the following comparison result, which is useful in proving that the minima of $J_1$ over $Y_{\BD{m},l}$ is a solution of \eqref{eq:PDE}.
Recalling
$\rho_i$ is defined in \eqref{eq:6.2}, we set
\begin{equation*}
\begin{split}
  \Lambda_1(v_0,w_0)&=\{u\in\Gamma_1(v_0,w_0)\,|\, \norm{u-v_0}_{\BD{T}_0}=\rho_1 \textrm{ or } \norm{u-w_0}_{\BD{T}_0}=\rho_2\}\\
  (resp. \quad \Lambda_1(w_0,v_0)&=\{u\in\Gamma_1(w_0,v_0)\,|\, \norm{u-v_0}_{\BD{T}_0}=\rho_4 \textrm{ or } \norm{u-w_0}_{\BD{T}_0}=\rho_3\})
  \end{split}
\end{equation*}
and
\begin{eqnarray}
  d_1(v_0,w_0) &=& \inf_{u\in\Lambda_1(v_0,w_0)}J_1(u), \label{eq:6.73}\\
  (resp.  \quad  d_1(w_0,v_0) &=& \inf_{u\in\Lambda_1(w_0,v_0)}J_1(u)). \nonumber
\end{eqnarray}

\begin{prop}\label{prop:6.74}
With $d_1(v_0,w_0)$ (resp. $d_1(w_0,v_0)$) defined as in \eqref{eq:6.73}, we have
\[d_1(v_0,w_0)>c_1(v_0,w_0) \quad\quad(resp. \quad d_1(w_0,v_0)>c_1(w_0,v_0)).\]
\end{prop}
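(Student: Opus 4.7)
Since $\Lambda_1(v_0,w_0)\subset \Gamma_1(v_0,w_0)$, the inequality $d_1(v_0,w_0)\geq c_1(v_0,w_0)$ is immediate. My plan is to obtain strict inequality by contradiction. Assume $d_1(v_0,w_0)=c_1(v_0,w_0)$ and choose $(u_k)\subset\Lambda_1(v_0,w_0)$ with $J_1(u_k)\to d_1=c_1$. Lemma \ref{lem:2.50} gives, along a subsequence, $u_k\to U$ pointwise with $U\in\hat\Gamma_1(v_0,w_0)$ and $J_1(U)<\infty$. Passing to a further subsequence, one of the two constraint equalities defining $\Lambda_1$ holds for every $k$; WLOG $\norm{u_k-v_0}_{\BD{T}_0}=\rho_1$. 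The pointwise limit then gives $\norm{U-v_0}_{\BD{T}_0}=\rho_1\in(0,\bar\rho)$, so $U\notin\{v_0,w_0\}$. My strategy is to show $U\in\MM_1(v_0,w_0)$, which contradicts $\rho_1\notin\rho_{-}(\MM_1(v_0,w_0))$ from \eqref{eq:6.2}. The parenthetical assertion for $d_1(w_0,v_0)$ is identical.

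To show $U\in\MM_1(v_0,w_0)$, I would first establish that $U$ satisfies \eqref{eq:PDE} at every $\BD{T}_i$ with $i\neq 0$. For such $i$, the perturbation $u_k+t\delta_{\BD{T}_i}$ leaves the value at $\BD{T}_0$ unchanged; after truncating by $\max(v_0,\min(\cdot,w_0))$ (which preserves this value and does not increase $J_1$, by Lemma \ref{lem:2.6miao}, in the spirit of the argument yielding \eqref{eq:claim}) the truncated perturbation still lies in $\Lambda_1(v_0,w_0)$, hence has $J_1$ bounded below by $d_1=c_1$. Combined with $J_1(u_k)\to c_1$, this supplies the inequality \eqref{eq:2.65} required by Lemma \ref{lem:2.64}, which then gives \eqref{eq:PDE} at $\BD{T}_i$ and at every $\BD{j}$ on the hyperplane $\{\BD{j}_1=i\}$. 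Applying Proposition \ref{prop:6.53} to the half-spaces $\{\BD{i}_1\geq 1\}$ and $\{\BD{i}_1\leq -1\}$, $U$ is asymptotic to some $\phi_\pm\in\{v_0,w_0\}$ as $\BD{i}_1\to\pm\infty$.

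The remaining task is to rule out the degenerate cases $\phi_-=\phi_+$. Suppose $\phi_-=\phi_+=v_0$; the $w_0$ case is symmetric. Then $U\in\Gamma_1(v_0)\cap\hat\Gamma_1(v_0,w_0)$ and $\norm{U-v_0}_{X_0}\geq\norm{U-v_0}_{\BD{T}_0}=\rho_1$, so Proposition \ref{prop:6.13} gives $J_1(U)\geq\beta(\rho_1)>0$. I would then invoke a cut-and-paste argument: using Proposition \ref{prop:6.27} together with the pointwise convergence $u_k\to U$ and the asymptotics $u_k\to w_0$ at $+\infty$, choose $q_k\to\infty$ where $u_k$ is $\sigma$-close to $v_0$ on $X_{q_k}$, and define $\tilde u_k=u_k$ on $\{\BD{i}_1\leq q_k\}$ continued by $v_0$, $\hat u_k=v_0$ on $\{\BD{i}_1\leq q_k\}$ continued by $u_k$. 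Then $\tilde u_k\in\Gamma_1(v_0)\cap\hat\Gamma_1(v_0,w_0)$ with $\tilde u_k\to U$ pointwise, and $\hat u_k\in\Gamma_1(v_0,w_0)$. Splitting $J_1(u_k)=\sum_i J_{1,i}(u_k)$ and using $J_{1,i}(v_0)=0$ plus the smallness of the gluing error near $q_k$ (of size $\kappa(\sigma)$ with $\kappa(\sigma)\to 0$) yields $J_1(\tilde u_k)+J_1(\hat u_k)\leq J_1(u_k)+\kappa(\sigma)$. Letting $k\to\infty$ and then $\sigma\to 0$ forces $J_1(U)+c_1(v_0,w_0)\leq c_1(v_0,w_0)$, contradicting $J_1(U)\geq\beta(\rho_1)>0$.

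Hence $\phi_-=v_0$ and $\phi_+=w_0$, so $U\in\Gamma_1(v_0,w_0)$ and $J_1(U)\geq c_1(v_0,w_0)$ by definition. A lower-semicontinuity argument, combining the pointwise continuity of each finite partial sum $J_{1;p,q}$ with the uniform tail decay furnished by Proposition \ref{prop:6.27} (anchored by the constraint at $\BD{T}_0$ which prevents the transition of $u_k$ from escaping to infinity), gives $J_1(U)\leq\liminf_k J_1(u_k)=c_1(v_0,w_0)$. Therefore $U\in\MM_1(v_0,w_0)$, which is the desired contradiction with \eqref{eq:6.2}. The principal obstacle is the energy bookkeeping in the cut-and-paste step: the gluing error $\kappa(\sigma)$ must be kept strictly below the positive constant $\beta(\rho_1)$, which is achievable since $\sigma$ can be chosen after $\rho_1$ is fixed, but requires careful coordination with the constants produced by Propositions \ref{prop:6.13} and \ref{prop:6.27}.
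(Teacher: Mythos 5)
Your overall strategy mirrors the paper's: take a minimizing sequence for $d_1(v_0,w_0)$, pass to a pointwise limit, show the limit solves \eqref{eq:PDE} off the hyperplane $\{\BD{i}_1=0\}$, invoke Proposition~\ref{prop:6.53} to get asymptotics $\phi_\pm$, and argue that, were $d_1=c_1$, the limit would land in $\MM_1(v_0,w_0)$, contradicting \eqref{eq:6.2}. However there are two real gaps. First, after ruling out $\phi_-=\phi_+$, you conclude ``hence $\phi_-=v_0$ and $\phi_+=w_0$,'' but the remaining alternative $\phi_-=w_0$, $\phi_+=v_0$ (so that $U$ lies in $\Gamma_1(w_0,v_0)$ rather than $\Gamma_1(v_0,w_0)$) is not excluded and cannot be absorbed by your final ``$U\in\MM_1(v_0,w_0)$'' contradiction. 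The paper's case split ``(i) $\psi=w_0$, (ii) $\phi=v_0$, (iii) $\psi=v_0$ and $\phi=w_0$'' is deliberately set up so that this case falls under (i); you need a separate cut-and-paste for it, splitting at some $s<0$ where $u_k$ is near $w_0$ and some $q>0$ where $u_k$ is near $w_0$, producing $g_k\in\Gamma_1(w_0)$ with $\norm{g_k-w_0}_{\BD{T}_0}\geq\min(\rho_2,\bar\rho-\rho_1)$ and $h_k\in\Gamma_1(v_0,w_0)$, then applying Proposition~\ref{prop:6.13} to $g_k$ directly.

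Second, the lower-semicontinuity step ``$J_1(U)\leq\liminf_k J_1(u_k)$'' is genuinely nontrivial and your suggested ingredients do not deliver it. Proposition~\ref{prop:6.27} only locates one $X_i$ in each window of length $l$ where $u_k$ is near $v_0$ or $w_0$; it does not give a uniform (in $k$) tail bound at $\pm\infty$, which is exactly where energy could escape. The paper instead truncates $u_k$ near $\pm p$ and $\pm q$, extends the truncations to periodic configurations $\bar f_k,\bar g_k$, and uses the crucial fact that $J_{1;p,q}\geq 0$ on periodic configurations (\cite[Proposition 3.1]{LC}) to bound the discarded tails from below; lower semicontinuity then follows after letting $q\to\infty$, $k\to\infty$, $p\to\infty$, and $\epsilon\to 0$ in that order. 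You rely on this same unproved inequality twice---once to dispose of $\phi_-=\phi_+=v_0$ (where it is in fact avoidable: apply Proposition~\ref{prop:6.13} to $\tilde u_k\in\Gamma_1(v_0)\cap\hat\Gamma_1(v_0,w_0)$, which satisfies $\norm{\tilde u_k-v_0}_{X_0}\geq\rho_1$, to get $J_1(\tilde u_k)\geq\beta(\rho_1)$ and contradict $J_1(\tilde u_k)\leq\kappa(\sigma)$ directly, with no passage to the limit $U$ needed) and once in the final paragraph, where it is essential and must be argued as in the paper's proof of \eqref{eq:claim789541}.
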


\begin{proof}
$d_1(v_0,w_0)\geq c_1(v_0,w_0)$ is obvious since $\Lambda_1(v_0,w_0)\subset \Gamma_1(v_0,w_0)$.
To prove the strict inequality, take a minimizing sequence $(u_k)$ for \eqref{eq:6.73}.
Thus
\begin{equation}\label{eq:123000}
  \norm{u_k-v_0}_{\BD{T}_0}=\rho_1 \quad\quad\textrm{or}\quad\quad \norm{u_k-w_0}_{\BD{T}_0}=\rho_2.
\end{equation}
By Lemmas \ref{lem:2.50} and \ref{lem:2.64}, there is a $P\in \hat{\Gamma}_1(v_0,w_0)$ with $J_1(P)<\infty$
such that $u_k\to P$ (taking a subsequence if necessary) pointwise as $k\to \infty$, and
\begin{equation}\label{eq:6.76}
  \norm{P-v_0}_{\BD{T}_0}=\rho_1 \quad \textrm{ or } \quad \norm{P-w_0}_{\BD{T}_0}=\rho_2.
\end{equation}
Proceeding as in \eqref{eq:claim} shows that $P$ is a solution of \eqref{eq:PDE} whenever $\BD{i}_1\neq 0$.
Thus by Proposition \ref{prop:6.53}, for some $\phi,\psi\in \{v_0,w_0\}$, we obtain 
\begin{equation}\label{eq:6.77}
  \norm{P-\phi}_{X_i}\to 0 \quad\quad \textrm{and}\quad\quad \norm{P-\psi}_{X_{-i}}\to 0
\end{equation}
as $i\to \infty$.
Now our discussions are divided into three cases: (i) $\psi=w_0$, or (ii) $\phi=v_0$, or (iii) $\psi=v_0$ and $\phi=w_0$.

Suppose $\psi=w_0$. For any $\epsilon>0$, by \eqref{eq:6.77}, there is an $s\in \{i\in\Z \,|\, i<-r\}$ such that for any $k\geq k_0(s)$,
\begin{equation*}
  \norm{u_k-w_0}_{X_s}\leq \epsilon.
\end{equation*}
Noting $u_k\in\Gamma_1(v_0,w_0)$, for any $k\in\N$, there exists $q=q(k)\in\N$ large enough, such that
\begin{equation*}
  \norm{u_k-w_0}_{X_q}\leq \epsilon.
\end{equation*}
Define
\begin{equation*}
g_k=\left\{
      \begin{array}{ll}
        w_0, & \BD{i}_1\leq s-1 \quad\textrm{ or }\quad \BD{i}_1\geq q+1, \\
        u_k, & s\leq \BD{i}_1 \leq q,
      \end{array}
    \right.
\end{equation*}
and
\begin{equation*}
h_k=\left\{
      \begin{array}{ll}
       u_k, & \BD{i}_1\leq s-1 \quad \textrm{ or }\quad \BD{i}_1\geq q+1, \\
        w_0, & s\leq \BD{i}_1 \leq q.
      \end{array}
    \right.
\end{equation*}
Then we have
\begin{equation}\label{eq:6.84}
  J_1(u_k)\geq J_1(g_k)+J_1(h_k)-\kappa(\epsilon),
\end{equation}
where $\kappa(\theta)\to 0$ as $\theta\to 0$,
and $g_k\in\Gamma_1(w_0)$, and
\begin{equation*}
  \norm{g_k-w_0}_{\BD{T}_0}=\norm{u_k-w_0}_{\BD{T}_0}.
\end{equation*}
Thus by \eqref{eq:123000},
\begin{equation*}
  \norm{w_0-g_k}_{\BD{T}_0}\geq \min(\rho_2,\bar{\rho}-\rho_1)=: \gamma.
\end{equation*}
Applying Proposition \ref{prop:6.13} gives
\begin{equation}\label{eq:6.89}
  J_1(g_k)\geq \beta(\gamma).
\end{equation}
Since $h_k\in\Gamma_1(v_0,w_0)$, by \eqref{eq:6.84} and \eqref{eq:6.89},
\begin{equation}\label{eq:6.90}
  J_1(u_k)\geq \beta(\gamma)+c_1(v_0,w_0)-\kappa(\epsilon).
\end{equation}
Taking $\epsilon$ small enough such that
\begin{equation*}
  2\kappa(\epsilon)\leq \beta(\gamma),
\end{equation*}
and then letting $k\to \infty$ in \eqref{eq:6.90} shows
\begin{equation}\label{eq:6.92}
  d_1(v_0,w_0)\geq c_1(v_0,w_0)+\frac{1}{2}\beta(\gamma).
\end{equation}

If case (ii), i.e., $\phi=v_0$ occurs, the above argument can be easily modified to prove that \eqref{eq:6.92} still holds.

Now suppose (iii) is satisfied, i.e., $\psi=v_0$ and $\phi=w_0$.
By \eqref{eq:6.76} and \eqref{eq:6.77}, $P\in\Lambda_1(v_0,w_0)$ and then $J_1(P)\geq d_1(v_0,w_0)$.
We claim that
\begin{equation}\label{eq:claim789541}
  d_1(v_0,w_0)\geq J_1(P).
\end{equation}
Suppose \eqref{eq:claim789541} holds for the moment.
If $d_1(v_0,w_0)=c_1(v_0,w_0)$, then $P\in \MM_1(v_0,w_0)$ follows from $P\in\Gamma_1(v_0,w_0)$.
But this contradicts \eqref{eq:6.76} and the choices of $\rho_1, \rho_2$.
Thus $d_1(v_0,w_0)> c_1(v_0,w_0)$.

To complete the proof of Proposition \ref{prop:6.74}, we need to prove \eqref{eq:claim789541}.
For this purpose, we use the argument as in the proof of (C) of \cite[Theorem 3.13]{LC}.
Define $\hat{\BD{T}}_i=\cup_{j=i-r-1}^{i+r+1}\BD{T}_j$. For any $\epsilon >0$, by \eqref{eq:6.6}, there is a $p_0=p_0(\epsilon)$ such that if $p\geq p_0$,
\begin{equation*}
    \norm{P-v_0}_{\hat{\BD{T}}_{-p}}\leq \epsilon/2, \quad \norm{P-w_0}_{\hat{\BD{T}}_{p}}\leq \epsilon/2.
\end{equation*}
Since $u_k\to P$ as $k\to \infty$, then
for any $p\geq p_0$, there is a $k_0=k_0(p)$ such that for any $k\geq k_0$,
\begin{equation*}
    \norm{u_k-P}_{\hat{\BD{T}}_{-p}}\leq \epsilon/2, \quad  \norm{u_k-P}_{\hat{\BD{T}}_{p}}\leq \epsilon/2.
\end{equation*}
Thus for such $k$ and $p$,
\begin{equation}\label{eq:3.19}
    \norm{u_k-v_0}_{\hat{\BD{T}}_{-p}}\leq \epsilon, \quad \norm{u_k-w_0}_{\hat{\BD{T}}_{p}}\leq \epsilon.
\end{equation}
For any fixed $k\geq k_0(p)$, since $u_k\in \Lambda_1(v_0,w_0)$, there is a $q_0=q_0(k)$ such that for $q\geq q_0$,
\begin{equation}\label{eq:3.21}
    \norm{u_k-v_0}_{\hat{\BD{T}}_{-q}}\leq \epsilon, \quad \norm{u_k-w_0}_{\hat{\BD{T}}_{q}}\leq \epsilon.
\end{equation}
Define
\begin{equation*}
    \bar{f}_k=\left\{
          \begin{array}{ll}
            w_0, & p-r\leq \BD{i}_1 \leq p+r \quad \textrm{ or }\quad  q-r\leq \BD{i}_1 \leq q+r,\\
            u_k, & p+r+1\leq \BD{i}_1\leq q-r-1,
          \end{array}
        \right.
\end{equation*}
and
\begin{equation*}
    \bar{g}_k=\left\{
          \begin{array}{ll}
            v_0, & -q-r\leq \BD{i}_1 \leq -q+r \quad \textrm{ or } \quad -p-r\leq \BD{i}_1 \leq -p+r,\\
            u_k, & -q-r+1\leq \BD{i}_1\leq -p+r-1,
          \end{array}
        \right.
\end{equation*}
Now we extend $\bar{f}_k$ (resp. $\bar{g}_k$) to a $(q+2r+1-p)$-periodic function of $\BD{i}_1$ and still denote it by $\bar{f}_k$ (resp. $\bar{g}_k$).
Then by \eqref{eq:3.19}-\eqref{eq:3.21}, there is a $\kappa_1(\epsilon)$ ($\kappa_1(\epsilon)\to 0$ as $\epsilon\to 0$) such that
\begin{equation}\label{eq:3.23}
\begin{split}
    |J_{1;p,q}(u_k)-J_{1;p,q}(\bar{f}_k)|&\leq \kappa_1(\epsilon),\\
    |J_{1;-q,-p}(u_k)-J_{1;-q,-p}(\bar{g}_k)|&\leq \kappa_1(\epsilon).
    \end{split}
\end{equation}
By \cite[Proposition 3.1]{LC},
\begin{equation}\label{eq:3.24}
\begin{split}
    J_{1;p,q}(\bar{f}_k)&=J_{1;p-r,q+r}(\bar{f}_k)\geq 0,\\
    J_{1;-q,-p}(\bar{g}_k)&=J_{1;-q-r,-p+r}(\bar{g}_k)\geq 0.
    \end{split}
\end{equation}
By \eqref{eq:3.23}-\eqref{eq:3.24},
\begin{equation*}
\begin{split}
    J_{1;1,\infty}(u_k)&=J_{1;1,p-1}(u_k)+J_{1;p,q}(u_k)+J_{1;q+1,\infty}(u_k)\\
    &\geq J_{1;1,p-1}(u_k)-\kappa_1(\epsilon)+J_{1;q+1,\infty}(u_k),\\
    J_{1;-\infty,0}(u_k)&=J_{1;-\infty,-q-1}(u_k)+J_{1;-q,-p}(u_k)+J_{1;-p+1,0}(u_k)\\
    &\geq J_{1;-\infty,-q-1}(u_k)-\kappa_1(\epsilon)+J_{1;-p+1,0}(u_k).
    \end{split}
\end{equation*}
Adding the above two inequalities and
letting $q\to \infty$, we get
\begin{equation*}
    J_1(u_k)\geq J_{1;-p+1,p-1}(u_k)-2\kappa_1(\epsilon).
\end{equation*}
Thus letting $k\to \infty$ shows that
\begin{equation*}
    d_1(v_0,w_0)\geq J_{1;-p+1,p-1}(P)-2\kappa_1(\epsilon).
\end{equation*}
Finally,
letting $p\to\infty$ and then $\epsilon\to 0$ yields
\begin{equation*}
    d_1(v_0,w_0)\geq J_1(P).
\end{equation*}
This proves \eqref{eq:claim789541} and complete the proof of $d_1(v_0,w_0)>c_1(v_0,w_0)$.
$d_1(w_0,v_0)>c_1(w_0,v_0)$ can be proved similarly.
\end{proof}

The following result means that minimal solutions of \eqref{eq:PDE} in $\hat{\Gamma}(v_0,w_0)$ are Birkhoff.
It will be used to show that our solution obtained in Theorem \ref{thm:6.8} is not minimal any more.

\begin{prop}\label{prop:6.93}
If $u\in\hat{\Gamma}_1(v_0,w_0)$ is minimal, $u$ is Birkhoff.
\end{prop}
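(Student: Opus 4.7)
I argue by contradiction. Suppose $u\in\hat{\Gamma}_1(v_0,w_0)$ is minimal but not Birkhoff: then there exist $k\in\Z$ and $j\in\{1,\dots,n\}$ such that the shift $\tilde u:=\tau_{-k}^{j}u$ is incomparable with $u$. Since $u$ is $1$-periodic in $\BD{i}_2,\dots,\BD{i}_n$, the only nontrivial case is $j=1$, so we may assume $\tilde u=\tau_{-k}^{1}u$ for some $k\in\Z$. Because the formal sum \eqref{eq:potential} is translation invariant, $\tilde u$ is also minimal; in particular both $u$ and $\tilde u$ satisfy \eqref{eq:PDE}. Setting $\phi:=\max(u,\tilde u)$ and $\psi:=\min(u,\tilde u)$, one has $\phi,\psi\in\hat{\Gamma}_1(v_0,w_0)$, and the strategy is to show that $\phi$ (equivalently $\psi$) itself solves \eqref{eq:PDE}; Corollary \ref{cor:bijiao} applied to the two solutions $u$ and $\tilde u$ will then force them to be comparable, which is the desired contradiction.

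To show $\phi$ solves \eqref{eq:PDE}, I would first establish a one-sided minimality for $\phi$ via a cascading max/min argument. Given a compact modification $w$ of $\phi$ with $w\leq\phi$ and $w=\phi$ outside a bounded set $K$, set $w_1:=\max(w,u)$ and $w_2:=\max(w_1,\tilde u)$. Outside $K$ one has $w=\phi\geq u$ and $w=\phi\geq\tilde u$, so $w_1=\phi$ and $w_2=\phi$ there; the pointwise bound $w\leq\phi$ then forces $w_2=\max(w,u,\tilde u)=\max(w,\phi)=\phi$ globally. The functions $\min(w,u)$ and $\min(w_1,\tilde u)$ are compact modifications of $u$ and $\tilde u$ respectively, so two successive applications of Lemma \ref{lem:2.6miao} combined with the minimality of $u$ and $\tilde u$ yield
\[
S_E(\phi)=S_E(w_2)\leq S_E(w_1)\leq S_E(w)
\]
for any sufficiently large bounded $E\supset K$. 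Hence $\phi$ minimizes its energy among compact modifications $w\leq\phi$; a symmetric cascade shows that $\psi$ minimizes its energy among compact modifications $w\geq\psi$.

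The final step is to promote this one-sided minimality to the Euler--Lagrange equation for $\phi$, and this is where the main obstacle lies. The one-sided minimality yields the variational inequality $\sum_{\BD{j}}\partial_{\BD{i}}S_{\BD{j}}(\phi)\leq 0$ at every $\BD{i}$ (take $w=\phi-t\delta_{\BD{i}}$ with $t>0$), and the pointwise monotonicity clause $\partial_{\BD{k},\BD{j}}s\leq 0$ of (S\ref{eq:S3}) gives the same bound independently, since at each $\BD{i}$ one has $\phi\geq u$ (or $\phi\geq\tilde u$) with equality in the coordinate $\BD{i}$ itself. To upgrade this inequality into an equality I would invoke the strict clause $\partial_{\BD{0},\BD{j}}s<0$ at nearest-neighbor pairs in (S\ref{eq:S3}), together with a nearest-neighbor propagation in $\Z^n$ mirroring the proof of Lemma \ref{lem:unknown}: at any point $\BD{i}_0$ where the variational inequality were strict, the strict monotonicity would propagate the equality $\phi=u$ (resp.\ $\phi=\tilde u$) from $\BD{i}_0$ to all of its nearest neighbors in $\BD{i}_0+B_{\BD{0}}^{r}$, and hence by connectedness of $\Z^n$ to the whole lattice, already producing the desired comparability of $u$ and $\tilde u$. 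Otherwise $\phi$ satisfies \eqref{eq:PDE} everywhere, and Corollary \ref{cor:bijiao} closes the argument.
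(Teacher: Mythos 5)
Your setup is right: since $u$ is $1$-periodic in $\BD{i}_2,\dots,\BD{i}_n$, only shifts $\tau_{-k}^{1}$ can fail the Birkhoff trichotomy, and your cascading $\max/\min$ argument correctly establishes the \emph{one-sided} minimality of $\phi=\max(u,\tau_{-k}^{1}u)$ among compact modifications $w\le\phi$ (and, symmetrically, of $\psi=\min(u,\tau_{-k}^{1}u)$ among $w\ge\psi$). This part is a valid and clean observation.

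The gap is in the last step. One-sided minimality from below yields only the \emph{subsolution} inequality $\sum_{\BD{j}}\partial_{\BD{i}}S_{\BD{j}}(\phi)\le 0$; there is no accompanying reverse inequality, and a subsolution need not solve \eqref{eq:PDE}. Your proposed "propagation" is logically reversed: if $\phi(\BD{i}_0)=u(\BD{i}_0)$ and the inequality is \emph{strict} at $\BD{i}_0$, then, writing the difference $\sum_{\BD{j}}\partial_{\BD{i}_0}S_{\BD{j}}(\phi)-\sum_{\BD{j}}\partial_{\BD{i}_0}S_{\BD{j}}(u)$ as a sum of nonpositive terms coming from (S\ref{eq:S3}), strictness forces some nonpositive term to be negative, hence $\phi(\BD{k})>u(\BD{k})$ for some neighbor $\BD{k}$ --- the opposite of what you assert. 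It is \emph{equality} at $\BD{i}_0$ together with $\partial_{\BD{0},\BD{j}}s<0$ that forces $\phi=u$ on nearest neighbors (this is precisely the mechanism behind Lemma \ref{lem:unknown}), and even that does not propagate globally because it gives no control at a neighboring site where $\phi$ equals $\tau_{-k}^{1}u$ rather than $u$. So Corollary \ref{cor:bijiao} is never legitimately in play: you have not shown $\phi$ or $\psi$ solves \eqref{eq:PDE}.

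By contrast, the paper's argument follows Rabinowitz--Stredulinsky's Proposition 6.93, whose mechanism is quite different: it uses the asymptotics of minimal elements of $\hat{\Gamma}_1(v_0,w_0)$ (Proposition \ref{prop:6.53} here, replacing RS's Corollary 6.54), the classification of $\MM_1$ heteroclinics (\cite[Theorem 3.13]{LC} replacing RS's Theorem 3.2), and a finite-box truncation/renormalization of the energy near the ends (the terms $J_{1;-p-1-r,-p+r}(u_p)$, $J_{1;p-r,p+1+r}(u_p)$, accounting for the range-$r$ nonlocality). These tools control the unbounded crossing set between $u$ and $\tau_{-k}^1u$, which your $\max/\min$ argument by itself cannot do. As written, your proof does not close.
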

The proof of Proposition \ref{prop:6.93} almost follows \cite[Proposition 6.93]{RS} with slight modifications.
For example,
in the proof of \cite[Proposition 6.93]{RS}, the quoted Corollary 6.54 and Theorem 3.2 are replaced by
Proposition \ref{prop:6.53} in the present paper
and \cite[Theorem 3.13]{LC}, respectively;
$J_{1,-p-1}(u_p)$ and $J_{1,p}(u_p)$ (\cite[p.79, line -4]{RS}) are replaced by $J_{1;-p-1-r,-p+r}(u_p)$ and $J_{1;p-r,p+1+r}(u_p)$ since our problem is nonlocal.
Thus we omit the proof here.

\section{Proof of Theorem \ref{thm:6.8}}\label{chap:7}
Now we prove Theorem \ref{thm:6.8}.
Take a minimizing sequence $(u_k)$ for \eqref{eq:6.7}.
We claim that
there is an $M>0$ such that
$J_1(u_k)\leq M$
for all $k\in\N$.
Indeed, fix some $V_1\in\MM_1(v_0,w_0)$ and $W_1\in\MM_1(w_0,v_0)$. Translating $V_1$ (resp. $W_1$) by $\tau^1$ if necessary, we may assume that $V_1$ (resp. $W_1$) satisfies
\begin{equation*}
\begin{split}
  &\rho_{-}(\tau^{1}_{-i}V_1)\leq \rho_1, \quad \textrm{for any } i\leq \BD{m}_1,\\
  (\textrm{resp. }\quad &\rho_{-}(\tau^{1}_{-i}W_1)\leq \rho_4, \quad \textrm{for any } i \geq \BD{m}_4)
  \end{split}
\end{equation*}
and $J_{1;p,q }(V_1)\leq 1/2$ (resp. $J_{1;p,q}(W_1)\leq 1/2$) for any $p\leq q\leq \BD{m}_1$ (resp. $q\geq p\geq \BD{m}_4$).
Let
\begin{equation}\label{eq:kappa}
  \kappa(\theta):=\sup_{u:\sum^{r}_{j=-r}\norm{u-v_0}_{\BD{T}_j}\leq \theta}J_{1,0}(u)+\sup_{v:\sum^{r}_{j=-r}\norm{v-w_0}_{\BD{T}_j}\leq \theta}J_{1,0}(v).
\end{equation}
Set
\begin{equation*}
  \hat{U}=\left\{
            \begin{array}{ll}
             V_1, & \BD{i}_1\leq \BD{m}_1, \\
              w_0, & \BD{m}_1+1\leq \BD{i}_1\leq \BD{m}_4-1, \\
              W_1, & \BD{m}_4\leq \BD{i}_1.
            \end{array}
          \right.
\end{equation*}
If $\BD{m}_4-\BD{m}_1\geq 2r+1$,
\begin{equation*}
\begin{split}
  J_1(\hat{U})&=J_{1;-\infty,\BD{m_1}-r-1}(V_1) +J_{1;\BD{m}_1-r,\BD{m}_4+r}(\hat{U})+ J_{1;\BD{m}_4+r+1,\infty}(W_1)\\
  &=J_{1;-\infty,\BD{m_1}-r-1}(V_1) +J_{1;\BD{m}_1-r,\BD{m}_1+r}(\hat{U})+J_{1;\BD{m}_4-r,\BD{m}_4+r}(\hat{U})+ J_{1;\BD{m}_4+r+1,\infty}(W_1)\\
  &\leq \frac{1}{2}+\kappa(\bar{\rho})(2r+1+2r+1)+\frac{1}{2}\\
  &=:M;
  \end{split}
\end{equation*}
if $\BD{m}_4-\BD{m}_1\leq 2r$,
\begin{equation}\label{eq:mm}
\begin{split}
  J_1(\hat{U})&=J_{1;-\infty,\BD{m_1}-r-1}(V_1) +J_{1;\BD{m}_1-r,\BD{m}_4+r}(\hat{U})+ J_{1;\BD{m}_4+r+1,\infty}(W_1)\\
  &\leq \frac{1}{2} +\kappa(\bar{\rho})(\BD{m}_4-\BD{m}_1+2r+1)+\frac{1}{2}\\
  &\leq M.
  \end{split}
\end{equation}
Note that $M>0$ is independent of $\BD{m}$ and $l$.

By Lemma \ref{lem:2.50}, with $\MY=Y_{\BD{m},l}$,
there is a $U\in\hat{\Gamma}_1(v_0,w_0)$ satisfying
$u_k\to U$ (maybe up to a subsequence) pointwise as $k\to \infty$, and such that
\begin{equation*}
  J_1(U)\leq M+2K_1
\end{equation*}
and $U$ satisfies \eqref{eq:6.5}.
$U$ will be shown to be a solution of \eqref{eq:PDE}.
Note that using the arguments of proving \eqref{eq:claim}, $U$ satisfies \eqref{eq:PDE} outside the four constraint regions of \eqref{eq:6.5}.

To complete the proof of Theorem \ref{thm:6.8}, we shall prove:\\
(A) if $l\gg 0$, there is an $X_i$ in every constraint region such that $U$ satisfies \eqref{eq:PDE} on $X_i$; \\
(B) $U$ satisfies \eqref{eq:6.6} and then $U\in Y_{\BD{m},l}$; \\
(C) $J_1(U)=b_{\BD{m},l}$; \\
(D) if $\BD{m}_2-\BD{m}_1$, $\BD{m}_4-\BD{m}_3\gg 0$, $U$ satisfies \eqref{eq:PDE} on the four constraint regions.\\

\emph{Proof of (A).}
The proof of (A) almost the same as `Proof of (A)' in \cite[p.82]{RS}.
Here we provide the details for the reader's convenience.
Applying Proposition \ref{prop:6.27} to \[\sigma\in (0, \min_{1\leq j\leq 4}(\rho_j,\bar{\rho}-\rho_j))\] and $M$ gives an $l_0=l_0(\sigma,M)$.
For $l\geq l_0$, we have an $X_i\subset [\BD{m}_1-l,\BD{m}_1]\times \{0\}^{n-1}$ and $\phi_i\in\{v_0,w_0\}$ such that
\begin{equation*}
  \norm{U-\phi_i}_{X_i}\leq \sigma.
\end{equation*}
Owing to the choice of $\sigma$, $\phi_i=v_0$ and
\begin{equation}\label{eq:1221221}
  \rho_-(\tau_{-j}^{1}U)<\rho_1
\end{equation}
for all $j\in X_i$.
We claim that
\begin{equation}\label{eq:claim11212}
  \textrm{$U$ satisfies \eqref{eq:PDE} on $X_i$.}
\end{equation}
In fact, note that
\eqref{eq:1221221} also holds for $u_k$ with $k$ large enough.
So for $t$ small enough and $k$ large, we have $\max(u_k+t\delta_{\BD{T}_j},v_0)\in Y_{\BD{m},l}$ and
\begin{equation*}
\begin{split}
 b_{\BD{m},l}\leq  & J_1(u_k)=:  \epsilon' _k+ b_{\BD{m},l} \\
   \leq &\epsilon' _k+ J_1(\max(u_k+t\delta_{\BD{T}_j},v_0))\\
    \leq& \epsilon' _k+J_1(\max(u_k+t\delta_{\BD{T}_j},v_0))+J_1(\min(u_k+t\delta_{\BD{T}_j},v_0)) \\
    \leq &\epsilon' _k+J_1(u_k+t\delta_{\BD{T}_j}) ,
\end{split}
\end{equation*}
where $\epsilon' _k\to 0$ as $k\to \infty$. The fourth inequality follows from Lemma \ref{lem:2.72} and $\min(u_k+t\delta_{\BD{T}_j},v_0)\in \Gamma_1(v_0)$.
The last inequality follows form Lemma \ref{lem:2.6miao}.
Now Lemma \ref{lem:2.64} implies \eqref{eq:claim11212}.

Similarly, one can obtain three subsets (we abuse notations here by a same notation) $X_i$ %
contained in $[\BD{m}_2,\BD{m}_2+l]\times \{0\}^{n-1}$, $[\BD{m}_3-l,\BD{m}_3]\times \{0\}^{n-1}$ and $[\BD{m}_4,\BD{m}_4+l]\times \{0\}^{n-1}$,
with corresponding $\phi_i=w_0$ in the two former cases and $\phi_i=v_0$ in the third case.
That $U$ satisfies \eqref{eq:PDE} on these $X_i$ can be proved as that of \eqref{eq:claim11212} with a few obvious modifications.
\\

\emph{Proof of (B).}
We only check the case of $i\to\infty$ since the other case can be proved similarly.
By Proposition \ref{prop:6.53} with $R=\BD{m}_4+l$, we have
\begin{equation}\label{eq:7.6}
  \norm{U-\phi}_{X_j}\to 0, \quad j\to \infty,
\end{equation}
for some $\phi\in\{v_0,w_0\}$.
If $\phi=v_0$, we are done.
Now suppose, by contradiction,
$\phi=w_0$.
By \eqref{eq:7.6}, for large $k$ we have
\begin{equation}\label{eq:7.8}
\norm{u_k-v_0}_{\BD{T}_p}\geq   \norm{U-v_0}_{\BD{T}_p}-\norm{u_k-U}_{\BD{T}_p}\geq \frac{3}{4}\bar{\rho}-\frac{1}{4}\bar{\rho}= \frac{1}{2}\bar{\rho}.
\end{equation}
By
(A), there exists an $i\in (\BD{m}_4+2,\BD{m}_4+l-2)$ such that
\begin{equation*}
  \norm{U-v_0}_{X_i}\leq \sigma,
\end{equation*}
and thus for large $k$,
\begin{equation*}
  \norm{u_k-v_0}_{X_i}\leq 2\sigma.
\end{equation*}

Choose $q_k>p$ satisfying
\begin{equation*}
  \norm{u_k-v_0}_{X_{q_k}}\leq \sigma.
\end{equation*}
Define
\begin{equation}\label{eq:7.15}
  h_k=\left\{
        \begin{array}{ll}
          v_0, & \BD{i}_1\leq i-r-1 \quad \textrm{ or } \quad \BD{i}_1\geq q_k+r+1, \\
          u_k, & i-r\leq \BD{i}_1\leq q_k+r.
        \end{array}
      \right.
\end{equation}
Thus $h_k\in \Gamma_1(v_0)\cap \hat{\Gamma}_1(v_0,w_0)$, and by \eqref{eq:7.15} and \eqref{eq:7.8}, for $k$ large enough
\begin{equation}\label{eq:7.16}
\begin{split}
  \beta(\frac{\bar{\rho}}{2})&\leq J_1(h_k)=J_{1;i-2r-1,q_k+2r+1}(h_k)\\
  &=J_{1;i-2r-1,i-1}(h_k)+J_{1;i,q_k}(u_k)+J_{1;q_k+1,q_k+2r+1}(h_k)\\
&\leq J_{1;i,q_k}(u_k)+\kappa(2\sigma)(4r+2),
\end{split}
\end{equation}
where $\beta$ is given by Proposition \ref{prop:6.13}.
By \eqref{eq:7.16}, we have
\begin{equation}\label{eq:7.17}
  J_1(u_k)\geq J_{1;-\infty,i-1}(u_k)+\beta(\frac{\bar{\rho}}{2})-\kappa(2\sigma)(4r+2)+J_{1;q_k+1,\infty}(u_k).
\end{equation}
Setting
\begin{equation*}%
  g_k=\left\{
        \begin{array}{ll}
          u_k, & \BD{i}_1\leq i-1 \textrm{ or } \BD{i}_1\geq q_k+2, \\
          v_0, & i\leq \BD{i}_1\leq q_k+1,
        \end{array}
      \right.
\end{equation*}
gives
\begin{equation*}
  |J_{1;-\infty,i-1}(u_k)-J_{1;-\infty,i-1}(g_k)|+|J_{1;i,q_k}(g_k)|+|J_{1;q_k+1,\infty}(u_k)-J_{1;q_k+1,\infty}(g_k)|\leq \kappa_0(\sigma)
\end{equation*}
with $\kappa_0(\theta)\to 0$ as $\theta\to 0$.
Therefore
\begin{equation}\label{eq:7.20}
  J_{1;-\infty,i-1}(u_k)+J_{1;q_{k}+1,\infty}(u_k)\geq J_1(g_k)-2\kappa_0(\sigma).
\end{equation}
Thus \eqref{eq:7.17} and \eqref{eq:7.20} yield
\begin{equation*}
  J_1(u_k)\geq J_1(g_k)+\beta(\frac{\bar{\rho}}{2})-\kappa(2\sigma)(4r+2)-2\kappa_0(\sigma).
\end{equation*}
Let $\sigma$ be small enough such that
\begin{equation*}
  \kappa(2\sigma)(4r+2)< \frac{1}{3} \beta(\frac{\bar{\rho}}{2}), \quad 2\kappa_0(\sigma)<\frac{1}{3} \beta(\frac{\bar{\rho}}{2}).
\end{equation*}
Thus \[J_1(u_k)\geq J_1(g_k)+\frac{1}{3}\beta(\frac{\bar{\rho}}{2})\geq b_{\BD{m},l}+\frac{1}{3}\beta(\frac{\bar{\rho}}{2}),\]
where the last inequality follows from $(g_k)\subset Y_{\BD{m},l}$.
But this is absurd since
$J_1(u_k)\to b_{\BD{m},l}$ as $k\to \infty$.\\

\emph{Proof of (C).}
$J_1(U)\geq b_{\BD{m},l}$ follows $U\in Y_{\BD{m},l}$.
A variant proof of \eqref{eq:claim789541} shows $J_1(U)\leq b_{\BD{m},l}$.
\\

\emph{Proof of (D).}
By the proof of \eqref{eq:claim11212}, to prove (D), it suffices to show that
there are strict inequalities in \eqref{eq:6.5} for $U$
provided that $\BD{m}_2-\BD{m}_1$, $\BD{m}_4-\BD{m}_3$ are large enough.
Suppose, by contradiction, there is some $i$, such that 
\begin{equation*}
  \norm{U-v_0}_{\BD{T}_i}=\rho_1\quad\quad\textrm{or}\quad\quad \norm{U-w_0}_{\BD{T}_i}=\rho_2.
\end{equation*}
By (A), there is a
$q\in [\BD{m}_3-l+2,\BD{m}_3-3]$ such that
\begin{equation}\label{eq:7.25}
  \norm{U-w_0}_{X_q}\leq \sigma.
\end{equation}
Set
\begin{equation*}
  \Phi=\left\{
         \begin{array}{ll}
           U, & \BD{i}_1\leq q, \\
           w_0, & q+1\leq \BD{i}_1,
         \end{array}
       \right.
\end{equation*}
and
\begin{equation*}
  \Psi=\left\{
         \begin{array}{ll}
           w_0, & \BD{i}_1\leq q, \\
           U, & q+1\leq \BD{i}_1,
         \end{array}
       \right.
\end{equation*}
Since $\tau_{-q}^{1}\Phi\in \Lambda_1(v_0,w_0)$ and $\Psi\in\Gamma_1(w_0,v_0)$, we have
\begin{equation}\label{eq:7.30}
  J_1(\Phi)=J_1(\tau_{-q}^{1}\Phi)\geq d_1(v_0,w_0) \quad\textrm{and}\quad J_1(\Psi)\geq c_1(w_0,v_0).
\end{equation}
By \eqref{eq:7.25}-\eqref{eq:7.30},
\begin{equation}\label{eq:7.33}
\begin{split}
  J_1(U)&\geq J_1(\Phi)+J_1(\Psi)-\kappa_0(\sigma)\\
  &\geq d_1(v_0,w_0)+c_1(w_0,v_0)-\kappa_0(\sigma).
  \end{split}
\end{equation}

Concatenating suitable minimal and Birkhoff solutions of \eqref{eq:PDE}, we can construct a configuration in $Y_{\BD{m},l}$, which gives an upper bound for $J_1(U)$.
To this end, choose $V_1\in \MM(v_0,w_0)$ and $W_1\in \MM(w_0,v_0)$ such that $\norm{V_1-w_0}_{X_q}\leq \sigma$, $\norm{W_1-w_0}_{X_q}\leq \sigma$ and \[J_{1;-\infty,q}(V_1)+J_{1;q+1,\infty}(W_1)\leq c_1(v_0,w_0)+c_1(w_0,v_0)+\epsilon.\] Here
$\epsilon=\epsilon(\BD{m}_2-\BD{m}_1,\BD{m}_4-\BD{m}_3)>0$ satisfies $\epsilon\to 0$ as $\BD{m}_2-\BD{m}_1,\BD{m}_4-\BD{m}_3\to \infty$.
Define
\begin{equation*}
  \hat{U}=\left\{
            \begin{array}{ll}
              V_1, & \BD{i}_1\leq q, \\
              W_1, & q+1\leq \BD{i}_1.
            \end{array}
          \right.
\end{equation*}
Then we obtain
\begin{equation}\label{eq:7.35}
  J_1(U)\leq J_1(\hat{U})\leq c_1(v_0,w_0)+c_1(w_0,v_0)+\epsilon+\kappa_0(\sigma),
\end{equation}
Hence \eqref{eq:7.33}-\eqref{eq:7.35} and Proposition \ref{prop:6.74} imply
\begin{equation*}
  0<d_1(v_0,w_0)-c_1(v_0,w_0)\leq \epsilon +2\kappa_0(\sigma).
\end{equation*}
But $\epsilon$ and $\kappa_0(\sigma)$ can be taken arbitrary small, a contradiction.
Similarly, one can prove that there will not hold equalities in \eqref{eq:6.5} (c), (d), provided that $\BD{m}_2-\BD{m}_1,\BD{m}_4-\BD{m}_3$ are large enough.
This proves (D) and thus Theorem \ref{thm:6.8}. 
\qed

\begin{rmk}\label{rem:7.39}
As stated in Section \ref{sec:intro}, we obtain infinitely many geometrically distinct solutions of \eqref{eq:PDE}.
Indeed, noticing the dependence of the solution $U$ given by Theorem \ref{thm:6.8} on $l,\BD{m}$ and $\rho_i$ ($1\leq i \leq 4$),
there will be infinitely many solutions if we fix $\rho_i$ and enlarge $l, \BD{m}_2-\BD{m}_1, \BD{m}_4-\BD{m}_3$.
\end{rmk}

\begin{rmk}
Since the solution $U$ obtained in Theorem \ref{thm:6.8} is not Birkhoff, by Proposition \ref{prop:6.93}, $U$ is not minimal.
\end{rmk}

Since lacking of the properties of Birkhoff and minimum, the structure of the set of solutions given by Theorem \ref{thm:6.8}
is difficult to analyze.
Rabinowitz and Stredulinsky
proved for fixed $\rho_i$ ($1\leq i\leq  4$), there is an ordered pair of solutions with different parameters $l,\BD{m}$.
In our setting, we have a similar result, but we shall not state and prove it here.
The interested reader is referred to \cite[Corollary 7.40]{RS} for this result.

\section{Generalizations}\label{chap:8}

We give some generalizations in this section.
We only state the necessary changes of the variational problems and the corresponding theorems but without proofs.
The proofs of these results follow as that of Theorem \ref{thm:6.8} in Section \ref{chap:7} with slight modifications.
Throughout this section, we assume $\rho_i$ ($1\leq i\leq 4$) are defined as in \eqref{eq:6.2}.

\subsection{Homoclinic solutions to $w_0$.}\label{sec4:4.1}
\

The first generalization is solutions homoclinic to $w_0$ as $|\BD{i}_1|\to\infty$ and periodic in $\BD{i}_2, \cdots, \BD{i}_n$.
Comparing to the variational problem \eqref{eq:6.4}-\eqref{eq:6.7}, 
we have to modify \eqref{eq:6.5} and \eqref{eq:6.6} as follows.
Suppose $\BD{m}$ satisfies \eqref{eq:6.3}.
If we replace
\eqref{eq:6.5} and \eqref{eq:6.6} by
\begin{equation*}
  \left\{
    \begin{array}{ll}
(a) \quad\rho_{+}(\tau_{-i}^{1}u)&\leq \rho_3,\quad \BD{m}_1-l\leq i\leq \BD{m}_1-1, \\
(b) \quad\rho_{-}(\tau_{-i}^{1}u)&\leq \rho_4, \quad \BD{m}_2\leq i\leq \BD{m}_2+l-1, \\
(c) \quad\rho_{-}(\tau_{-i}^{1}u)&\leq \rho_1,\quad \BD{m}_3-l\leq i\leq \BD{m}_3-1, \\
(d) \quad\rho_{+}(\tau_{-i}^{1}u)&\leq \rho_2, \quad \BD{m}_4\leq i\leq \BD{m}_4+l-1,
    \end{array}
  \right.
\end{equation*}
and
\begin{equation*}
  \norm{u-w_0}_{\BD{T}_i}\to 0,\quad |i|\to \infty,
\end{equation*}
respectively, then we obtain a
theorem for this case that is same to Theorem \ref{thm:6.8} without any modification.

\subsection{Homoclinic solutions: $2k$ transition solutions.}\label{sec4:4.2}
\

The second is $2k$ ($k>1$) transition solutions.
Suppose $\BD{m}=(\BD{m}_1,\BD{m}_2,\cdots,\BD{m}_{4k})\in \Z^{4k}$ satisfies
\begin{equation}\label{eq:mm11}
  \BD{m}_i<\BD{m}_{i+1} \quad \textrm{and} \quad \BD{m}_{j}+2l<\BD{m}_{j+1} \quad \textrm{for $j$ even}.
\end{equation}
Comparing to the variational problem \eqref{eq:6.4}-\eqref{eq:6.7}, what is needed to modify is \eqref{eq:6.5}.
\eqref{eq:6.5} should be replaced by
\begin{equation*}
  \left\{
    \begin{array}{ll}
(a) \quad\rho_{-}(\tau_{-i}^{1}u)&\leq \rho_1,\quad \BD{m}_{1+4j}-l\leq i\leq \BD{m}_{1+4j}-1,\quad j=0,1,\cdots,k-1, \\
(b) \quad\rho_{+}(\tau_{-i}^{1}u)&\leq \rho_2, \quad \BD{m}_{2+4j}\leq i\leq \BD{m}_{2+4j}+l-1, \quad j=0,1,\cdots,k-1,\\
(c) \quad\rho_{+}(\tau_{-i}^{1}u)&\leq \rho_3,\quad \BD{m}_{3+4j}-l\leq i\leq \BD{m}_{3+4j}-1, \quad j=0,1,\cdots,k-1,\\
(d) \quad\rho_{-}(\tau_{-i}^{1}u)&\leq \rho_4, \quad \BD{m}_{4+4j}\leq i\leq \BD{m}_{4+4j}+l-1, \quad j=0,1,\cdots,k-1.
    \end{array}
  \right.
\end{equation*}
We obtain:
\begin{thm}
Assume that $s\in C^2(\R^{B^{r}_{\BD{0}}},\R)$ satisfies (S\ref{eq:S1})-(S\ref{eq:S3}), $k\geq 2$, and \eqref{eq:*0} \eqref{eq:*1} hold.
If $l\gg 0$, there is a $U\in Y_{\BD{m},l}$ such that $J_1(U)=b_{\BD{m},l}=\inf_{Y_{\BD{m},l}}J_1$.
Moreover, $U$ is a solution of \eqref{eq:PDE} provided that $\BD{m}_2-\BD{m}_1,\cdots,\BD{m}_{4k}-\BD{m}_{4k-1}$ are large enough.
\end{thm}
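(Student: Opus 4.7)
The plan is to follow the proof of Theorem~\ref{thm:6.8} in Section~\ref{chap:7} almost verbatim, the only substantive change being to book-keep $4k$ constraint regions instead of $4$. I split the argument into the same four steps (A)--(D). The input ingredients are all in place: Propositions~\ref{prop:6.13}, \ref{prop:6.27}, \ref{prop:6.53}, and \ref{prop:6.74} are stated for individual transitions and can be applied locally at each block labelled by $j=0,1,\ldots,k-1$.

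First I would produce a uniform upper bound $M>0$ for $J_1(u_k)$ along any minimizing sequence, independent of $l$ and $\BD{m}$ (though depending on $k$). This is done by concatenating $k$ translates of a fixed $V_1\in\MM_1(v_0,w_0)$ and $W_1\in\MM_1(w_0,v_0)$ with plateaus at $w_0$ and $v_0$ in between, glued across the indices $\BD{m}_{1+4j},\BD{m}_{2+4j},\BD{m}_{3+4j},\BD{m}_{4+4j}$; each of the $2k$ glue regions of length at most $2r+1$ contributes at most $\kappa(\bar\rho)(2r+1)$ as in \eqref{eq:mm}, while the tails contribute at most $k$ using the translations of $V_1,W_1$ as in the bound preceding \eqref{eq:kappa}. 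This yields an admissible $\hat U\in Y_{\BD{m},l}$ with $J_1(\hat U)\le M=:k+2k(2r+1)\kappa(\bar\rho)$. Lemma~\ref{lem:2.50} then extracts a pointwise limit $U\in\hat\Gamma_1(v_0,w_0)$ satisfying the $4k$ constraints with $J_1(U)\le M+2K_1$.

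Steps (A), (B) and (C) go through unchanged. In (A), I apply Proposition~\ref{prop:6.27} independently in each of the $4k$ constraint windows with $\sigma\in(0,\min_{1\le j\le 4}\min(\rho_j,\bar\rho-\rho_j))$; the choice of $\sigma$ forces the corresponding $\phi_i$ to be $v_0$ in the regions of type (a),(d) and $w_0$ in those of type (b),(c), after which the proof of \eqref{eq:claim11212} carries over verbatim to show $U$ satisfies \eqref{eq:PDE} on some $X_i$ inside each window, once $l\ge l_0(\sigma,M)$. In (B), Proposition~\ref{prop:6.53} applied at $R=\BD{m}_{4k}+l$ and $R=-(\BD{m}_1-l)$ yields $\|U-\phi^{\pm}\|_{X_j}\to 0$ as $j\to\pm\infty$; the exact argument around \eqref{eq:7.8}--\eqref{eq:7.17} (comparison with the cut-off $h_k$ and use of $\beta(\bar\rho/2)$) rules out $\phi^{\pm}=w_0$, so $U\in Y_{\BD{m},l}$. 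Step (C), $J_1(U)=b_{\BD{m},l}$, is a copy of the variant of \eqref{eq:claim789541}.

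The main obstacle is (D), the strict inequalities in each of the $4k$ constraints when $\BD{m}_{i+1}-\BD{m}_i$ are all large. Suppose equality $\|U-v_0\|_{\BD{T}_i}=\rho_1$ or $\|U-w_0\|_{\BD{T}_i}=\rho_2$ occurs somewhere in, say, the $j$-th copy of region (a) or (b). Using (A), I locate split points $q_{\ell}\in[\BD{m}_{3+4\ell}-l+2,\BD{m}_{3+4\ell}-3]$ and $q'_{\ell}\in[\BD{m}_{1+4(\ell+1)}-l+2,\BD{m}_{1+4(\ell+1)}-3]$ near each $w_0$- and $v_0$-plateau, decompose $U$ into $2k$ pieces alternating in $\Gamma_1(v_0,w_0)$ and $\Gamma_1(w_0,v_0)$, and bound
\[
J_1(U)\ge d_1(v_0,w_0)+(k-1)c_1(v_0,w_0)+kc_1(w_0,v_0)-(2k-1)\kappa_0(\sigma)
\]
using the appropriate one $d_1$ from Proposition~\ref{prop:6.74} for the block with equality. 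Against this, concatenating $k$ optimally translated pairs $V_1,W_1$ with plateaus gives $\hat U\in Y_{\BD{m},l}$ with
\[
J_1(\hat U)\le kc_1(v_0,w_0)+kc_1(w_0,v_0)+\epsilon,
\]
where $\epsilon=\epsilon(\min_i(\BD{m}_{i+1}-\BD{m}_i))\to 0$ as the gaps grow. Subtracting yields $d_1(v_0,w_0)-c_1(v_0,w_0)\le\epsilon+(2k-1)\kappa_0(\sigma)$, contradicting Proposition~\ref{prop:6.74} once $\sigma$ is small and the gaps are large. The same argument, swapping the roles of $v_0,w_0$ and using $d_1(w_0,v_0)>c_1(w_0,v_0)$, rules out equality in the (c),(d) windows. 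The delicate point is that the error term $(2k-1)\kappa_0(\sigma)$ grows linearly in $k$ while the gain $d_1-c_1>0$ is fixed, so $\sigma$ must be chosen after $k$ is fixed; this dictates the order of the quantifiers ``for each $k\ge 2$, for $l$ large, for $\BD{m}_{i+1}-\BD{m}_i$ large.''
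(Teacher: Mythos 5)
Your proposal is correct and follows the paper's intended approach: the paper itself states only that the proof ``follows as that of Theorem~\ref{thm:6.8} in Section~\ref{chap:7} with slight modifications,'' and your bookkeeping of the $4k$ constraint windows, the $2k$-piece decomposition in step (D), and the use of the relevant $d_1$ against a $k$-fold concatenation in the upper bound are exactly those modifications. The remark that the error term $(2k-1)\kappa_0(\sigma)$ grows linearly in $k$ while $d_1-c_1>0$ is fixed, so that $\sigma$ must be chosen after $k$, is a useful clarification that the paper leaves implicit.
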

\begin{rmk}
As in Section \ref{sec4:4.1}, we also have homoclinic solutions asymptotic to $w_0$ as $|\BD{i}_1|\to \infty$.
We omit the statement of
the corresponding result.
\end{rmk}

\subsection{Heteroclinic solutons: $2k+1$ transition solutions.}\label{sec4:4.3}
\

The third is heteroclinic solution in $\hat{\Gamma}_1(v_0,w_0)$ that asymptotic to $v_0$ and $w_0$ as $\BD{i}_1\to -\infty$ and $\BD{i}_1\to \infty$, respectively.
This solution is $2k+1$ ($k\geq 1$) transition solution.
Suppose $\BD{m}=(\BD{m}_1,\BD{m}_2,\cdots,\BD{m}_{4k+2})\in\Z^{4k+2}$ satisfies \eqref{eq:mm11}.
Comparing to the variational problem \eqref{eq:6.4}-\eqref{eq:6.7},
we have to modify \eqref{eq:6.5} and \eqref{eq:6.6} as follows:
\begin{equation}\label{eq:6.5333333}
  \left\{
    \begin{array}{ll}
(a) \quad\rho_{-}(\tau_{-i}^{1}u)&\leq \rho_1,\quad \BD{m}_{1+4j}-l\leq i\leq \BD{m}_{1+4j}-1,\quad j=0,1,\cdots,k, \\
(b) \quad\rho_{+}(\tau_{-i}^{1}u)&\leq \rho_2, \quad \BD{m}_{2+4j}\leq i\leq \BD{m}_{2+4j}+l-1, \quad j=0,1,\cdots,k,\\
(c) \quad\rho_{+}(\tau_{-i}^{1}u)&\leq \rho_3,\quad \BD{m}_{3+4j}-l\leq i\leq \BD{m}_{3+4j}-1, \quad j=0,1,\cdots,k-1,\\
(d) \quad\rho_{-}(\tau_{-i}^{1}u)&\leq \rho_4, \quad \BD{m}_{4+4j}\leq i\leq \BD{m}_{4+4j}+l-1, \quad j=0,1,\cdots,k-1.
    \end{array}
  \right.
\end{equation}
and
\begin{equation}\label{eq:6.633335}
\begin{split}
  \norm{u-v_0}_{\BD{T}_i}&\to 0,\quad i\to -\infty, \\
  \norm{u-w_0}_{\BD{T}_i}&\to 0,\quad i\to \infty.
  \end{split}
\end{equation}
We obtain:
\begin{thm}
Let $s\in C^2(\R^{B^{r}_{\BD{0}}},\R)$ satisfies (S\ref{eq:S1})-(S\ref{eq:S3}), $k\geq 1$, and \eqref{eq:*0} \eqref{eq:*1} hold.
If $l\gg 0$, there is a $U\in Y_{\BD{m},l}$ such that $J_1(U)=b_{\BD{m},l}=\inf_{Y_{\BD{m},l}}J_1$.
Moreover, $U$ is a solution of \eqref{eq:PDE} provided that $\BD{m}_2-\BD{m}_1,\cdots,\BD{m}_{4k+2}-\BD{m}_{4k+1}$ are large enough.
\end{thm}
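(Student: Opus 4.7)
\medskip

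\noindent\textbf{Proof proposal for the $2k+1$ transition heteroclinic theorem.}

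The plan is to mimic the four-step scheme (A)--(D) used in the proof of Theorem \ref{thm:6.8}, adapted to the new constraint pattern \eqref{eq:6.5333333} and the new asymptotic condition \eqref{eq:6.633335}. First I would show $b_{\BD{m},l}<\infty$ by building an admissible test configuration. Using the gap condition \eqref{eq:*0}, pick $V_1\in\MM_1(v_0,w_0)$ and $W_1\in\MM_1(w_0,v_0)$ and appropriate translates of them; concatenate $V_1,W_1,V_1,W_1,\ldots$ over the $2k{+}1$ consecutive slabs marked by $\BD{m}_1,\ldots,\BD{m}_{4k+2}$, gluing on $v_0$ (resp. $w_0$) in the flat regions where the comparison function is already near $v_0$ (resp. $w_0$). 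A bound of the form \eqref{eq:mm} then gives $J_1(\hat U)\leq M$ with $M$ independent of $\BD{m}$ and $l$, and with $\hat U\in Y_{\BD{m},l}$. Taking a minimizing sequence $(u_k)$, Lemma \ref{lem:2.50} produces a pointwise limit $U\in\hat\Gamma_1(v_0,w_0)$ that satisfies the non-strict constraints \eqref{eq:6.5333333} by pointwise passage.

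Next, for step (A), I would apply Proposition \ref{prop:6.27} in each of the $4k{+}2$ constraint slabs with $\sigma\in(0,\min_j(\rho_j,\bar\rho-\rho_j))$: for $l$ sufficiently large we find in each slab an $X_i$ on which $U$ is $\sigma$-close to the \emph{correct} one of $v_0,w_0$ (the one dictated by the type of constraint, since $\sigma$ is smaller than the gap to the other). On such an $X_i$ the relevant constraint is strict; exactly as in the proof of \eqref{eq:claim11212}, using $\max/\min$ truncations against $v_0$ or $w_0$ together with Lemmas \ref{lem:2.72}, \ref{lem:2.6miao} and \ref{lem:2.64}, $U$ satisfies \eqref{eq:PDE} on those $X_i$. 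Step (B) is the asymptotics: Proposition \ref{prop:6.53} gives $\norm{U-\phi^{\pm}}_{X_i}\to 0$ as $\BD{i}_1\to\pm\infty$ for some $\phi^{\pm}\in\{v_0,w_0\}$. I would rule out $\phi^{-}=w_0$ and $\phi^{+}=v_0$ by the same cut-and-paste argument that produced \eqref{eq:7.16}--\eqref{eq:7.20}: if the asymptote is wrong, one splices in $v_0$ or $w_0$ at a good slab from (A), extracts a competitor in $Y_{\BD{m},l}$ with strictly smaller energy by at least $\tfrac13\beta(\bar\rho/2)$ (using Proposition \ref{prop:6.13}), contradicting minimality. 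Then $U\in Y_{\BD{m},l}$ and step (C), namely $J_1(U)=b_{\BD{m},l}$, follows by the truncation-at-large-$|p|$ argument of \eqref{eq:claim789541}.

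The main obstacle is step (D), which forces the strict constraint inequalities for all $2k{+}2$ outer slabs once the gaps $\BD{m}_{i+1}-\BD{m}_i$ are large. If equality holds somewhere, I would localize the configuration using (A) to find a slab-interior index $q$ where $\norm{U-\phi}_{X_q}\leq\sigma$ for the appropriate $\phi\in\{v_0,w_0\}$, split $U$ at $q$, and estimate $J_1(U)$ from below by $d_1+\text{(sum of $c_1$'s)}-\kappa_0(\sigma)$ via Proposition \ref{prop:6.74} applied to the one piece containing the supposed equality. A matching upper bound is produced by concatenating genuine minimal elements of $\MM_1(v_0,w_0)$ and $\MM_1(w_0,v_0)$ across all $2k{+}1$ transition regions; translating each so it is near the right endpoint on the gluing $X_q$, the total energy is bounded by the sum of the $c_1$'s plus an $\epsilon$ that tends to $0$ as all gaps $\BD{m}_{i+1}-\BD{m}_i\to\infty$. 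Subtracting yields $0<d_1-c_1\leq\epsilon+2\kappa_0(\sigma)$, which is a contradiction once the gaps are taken large enough and $\sigma$ small enough. The bookkeeping is heavier than in the $2$-transition case because there are more slabs to control simultaneously, but the argument at each individual slab is identical; this is the step where I expect the most care to be needed, particularly in ensuring that the comparison heteroclinic used for the upper bound in each of the $2k{+}1$ transition intervals can be translated to match the constraint slab endpoints consistently.
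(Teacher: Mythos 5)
Your proposal follows exactly the scheme the paper indicates: the paper does not give a detailed proof for the $(2k+1)$-transition theorem, but states that "the proofs of these results follow as that of Theorem \ref{thm:6.8} in Section \ref{chap:7} with slight modifications," and your four-step adaptation (A)--(D) using Propositions \ref{prop:6.27}, \ref{prop:6.53}, \ref{prop:6.13}, \ref{prop:6.74} together with the truncation/gluing arguments is exactly what those "slight modifications" amount to. The details you spell out---the test configuration concatenating $V_1,W_1,V_1,\ldots$, the asymptotics forced to be $v_0$ at $-\infty$ and $w_0$ at $+\infty$, the strictness of all $4k+2$ constraints when the gaps $\BD{m}_{i+1}-\BD{m}_i$ are large---are all correct and consistent with the paper's sketch.

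One small bookkeeping point to flag explicitly if you were to write this out in full: in step (D), when you split $U$ at $2k+1$ (or more) cut locations, each gluing incurs its own error $\kappa_0(\sigma)$, so the final inequality is $0 < d_1(v_0,w_0) - c_1(v_0,w_0) \leq \epsilon + C_k\,\kappa_0(\sigma)$ with a constant $C_k$ depending on $k$ (the number of cuts). This is harmless since $\sigma$ can still be taken small after $k$ is fixed, but it should be stated so that the order of quantifiers is clear: first fix $k$, then choose $\sigma$ small, then take the gaps large.
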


\begin{rmk}
Interchanging $v_0,w_0$ in \eqref{eq:6.633335} and modifying \eqref{eq:6.5333333} suitably
will give the heteroclinic solution from $w_0$ to $v_0$ in $\BD{i}_1$.
\end{rmk}

\subsection{Multitransition solutions in higher dimension.}\label{sec4:4.4}
\

In \cite[Section 4]{LC}, we construct solutions of \eqref{eq:PDE} heteroclinic in $\BD{i}_1$, $\BD{i}_2$ and periodic in $\BD{i}_3,\cdots \BD{i}_n$ under gap conditions \eqref{eq:*0}, \eqref{eq:*1}.
We denote by $\MM_2(v_1, w_1)$ the solutions heteroclinic in $\BD{i}_2$ from $v_1$ to $w_1$, and by $\MM_2(w_1,v_1)$ the ones heteroclinic in $\BD{i}_2$ from $w_1$ to $v_1$.
In particular, $\MM_2(v_1, w_1)$ and $\MM_2(w_1,v_1)$ are ordered sets.
Please see \cite[Section 4]{LC} for more details.

Using the constrained variational method of the present paper, we can obtain multitransition solutions lying between $v_1$ and $w_1$ provided
\begin{equation}\label{eq:*2}
\begin{split}
    \textrm{there are adjacent } &v_2, w_2 \in \MM_1(v_1,w_1) \textrm{ with } v_2<w_2, \\
    \textrm{ and there are adjacent } &\tilde{v}_2, \tilde{w}_2 \in \MM_1(w_1,v_1) \textrm{ with } \tilde{v}_2<\tilde{w}_2 .
    \end{split}\tag{$*_2$}
\end{equation}
For example, $2$ transition solutions lying between the gap of $v_1$ and $w_1$ can be obtained as follows.
Define
\[
\hat{\Gamma}_{2}(v_1,w_1):=\{u\in\R^{\Z\times \Z\times (\Z/\{1\})^{n-2}}\,|\, v_1\leq u\leq w_1\}.
\]
Set $\bar{\rho}:=\norm{w_1-v_1}_{E_0}$, $\rho_{-}(u):=\norm{u-v_1}_{E_0}$, and $\rho_{+}(u):=\norm{u-w_1}_{E_0}$.
Take $\rho_i\in (0, \bar{\rho})$, $1\leq i\leq 4$, satisfying
\begin{equation*}
\begin{split}
  \rho_1\not\in \rho_{-}(\MM_2(v_1,w_1)), & \quad\quad\rho_2\not\in\rho_{+}(\MM_2(v_1,w_1)), \\
   \rho_3\not\in \rho_{+}(\MM_2(w_1,v_1)), & \quad\quad\rho_4\not\in\rho_{-}(\MM_2(w_1,v_1)).
\end{split}
\end{equation*}
Let
\begin{equation*}
Y_{\BD{m},l}:= Y_{\BD{m},l}(v_0,w_0):= \{u\in \hat{\Gamma}_2(v_1,w_1)\,|\, u \textrm{ satisfies } \eqref{eq:4-6.5}-\eqref{eq:4-6.6}\},
\end{equation*}
where
\begin{equation}\label{eq:4-6.5}
  \left\{
    \begin{array}{ll}
(a) \quad\rho_{-}(\tau_{-i}^{2}u)&\leq \rho_1,\quad \BD{m}_1-l\leq i\leq \BD{m}_1-1, \\
(b) \quad\rho_{+}(\tau_{-i}^{2}u)&\leq \rho_2, \quad \BD{m}_2\leq i\leq \BD{m}_2+l-1, \\
(c) \quad\rho_{+}(\tau_{-i}^{2}u)&\leq \rho_3,\quad \BD{m}_3-l\leq i\leq \BD{m}_3-1, \\
(d) \quad\rho_{-}(\tau_{-i}^{2}u)&\leq \rho_4, \quad \BD{m}_4\leq i\leq \BD{m}_4+l-1,
    \end{array}
  \right.
\end{equation}
and
\begin{equation}\label{eq:4-6.6}
  \norm{u-v_0}_{E_i}\to 0,\quad |i|\to \infty. 
\end{equation}
Set
\begin{equation}\label{eq:4-6.7}
  b_{\BD{m},l}:= b_{\BD{m},l}(v_1,w_1):=\inf_{u\in Y_{\BD{m},l}}J_{2}(u).
\end{equation}
We have:
\begin{thm}\label{thm:4-6.8}
Suppose $s\in C^2(\R^{B^{r}_{\BD{0}}},\R)$ satisfies (S\ref{eq:S1})-(S\ref{eq:S3}).
Assume that \eqref{eq:*0}, \eqref{eq:*1} and \eqref{eq:*2} hold.
Then for each sufficiently large $l\in\N$, there is a $U=U_{m,l}\in Y_{\BD{m},l}$ such that $J_2(U)=b_{\BD{m},l}$.
Moreover, $U$ is a solution of \eqref{eq:PDE} provided that $\BD{m}_2-\BD{m}_1$ and $\BD{m}_4-\BD{m}_3$ are large enough.
\end{thm}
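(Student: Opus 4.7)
The plan is to mirror the proof of Theorem \ref{thm:6.8} carried out in Section \ref{chap:7}, but working with the new functional $J_2$ and the comparison building blocks $\MM_2(v_1,w_1)$ and $\MM_2(w_1,v_1)$ in place of $\MM_1(v_0,w_0)$ and $\MM_1(w_0,v_0)$. The gap conditions \eqref{eq:*0}--\eqref{eq:*1} guarantee (via \cite[Section 4]{LC}) that $\MM_2(v_1,w_1)$ and $\MM_2(w_1,v_1)$ are non-empty ordered sets, and the additional assumption \eqref{eq:*2} plays for the present theorem exactly the role that \eqref{eq:*1} played for Theorem \ref{thm:6.8}.

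The first step is to install the analogues of the preparatory results in Section \ref{sec:pre} at the level of $J_2$: the splitting Lemma \ref{lem:2.6miao} applied slicewise; the pointwise compactness Lemma \ref{lem:2.50} and the comparison Lemma \ref{lem:2.64} for $\hat{\Gamma}_2(v_1,w_1)$; the vanishing $c_2(v_1)=0$ statement analogous to Lemma \ref{lem:2.72}; and, most importantly, the strict-gap Propositions \ref{prop:6.13}, \ref{prop:6.27}, \ref{prop:6.53}, \ref{prop:6.74} reformulated with $v_0,w_0$ replaced by $v_1,w_1$ and the slice sets $X_i$ replaced by their two-dimensional analogues along direction $\BD{e}_2$. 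These carry over verbatim once the machinery of \cite[Section 4]{LC} is invoked; this is purely a translation and does not require new ideas.

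Next I would prove boundedness of the minimizing sequence. Fix $V_2\in\MM_2(v_1,w_1)$ and $W_2\in\MM_2(w_1,v_1)$, translate them in the $\BD{e}_2$ direction so that the $\rho_-,\rho_+$ constraints on the two outermost blocks are satisfied, and concatenate them with $w_1$ in the middle to produce a comparison configuration $\hat{U}$ whose $J_2$-value is controlled by a constant $M$ depending only on $\bar\rho$ and $r$, as in \eqref{eq:mm}. A pointwise limit $U$ then exists by the analogue of Lemma \ref{lem:2.50} and automatically satisfies \eqref{eq:4-6.5}.

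Finally I would run the four-part verification (A)--(D) exactly as in Section \ref{chap:7}. For (A), apply the analogue of Proposition \ref{prop:6.27} inside each of the four constraint blocks to find a slice $X_i$ on which $U$ is close to $v_1$ or $w_1$; then the localized variational comparison together with Lemma \ref{lem:2.64} shows $U$ solves \eqref{eq:PDE} there. For (B), apply the analogue of Proposition \ref{prop:6.53} at $\pm\infty$ in the $\BD{e}_2$ direction and rule out the ``wrong" asymptote by the same cutting argument as in \eqref{eq:7.15}--\eqref{eq:7.16}. For (C), $J_2(U)\ge b_{\BD{m},l}$ from (B) and $J_2(U)\le b_{\BD{m},l}$ from a cut-and-paste argument of the type used to prove \eqref{eq:claim789541}. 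For (D), argue by contradiction: if equality held in \eqref{eq:4-6.5} at some slice, split $U$ at a nearby good slice produced by (A), use the analogue of Proposition \ref{prop:6.74} to get $J_2(U)\ge d_2(v_1,w_1)+c_2(w_1,v_1)-\kappa_0(\sigma)$, and compare with an upper bound $c_2(v_1,w_1)+c_2(w_1,v_1)+\epsilon+\kappa_0(\sigma)$ obtained by splicing elements of $\MM_2(v_1,w_1)$ and $\MM_2(w_1,v_1)$; taking $\BD{m}_2-\BD{m}_1,\BD{m}_4-\BD{m}_3$ large forces $\epsilon\to 0$, contradicting Proposition \ref{prop:6.74}.

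The main obstacle is conceptual rather than technical: one must verify that every ingredient of Section \ref{sec:pre}, especially the strict-gap Proposition \ref{prop:6.74}, really does transfer to the second-direction setting. This needs \eqref{eq:*2} to play the role of \eqref{eq:*1}, and it needs the equivalents of Lemmas \ref{lem:2.72} and \ref{lem:unknown} at the $J_2$-level (available from \cite[Section 4]{LC}). Once those ingredients are in hand, the remaining work is a careful but mechanical adaptation of Section \ref{chap:7}, so I would not write it out in detail.
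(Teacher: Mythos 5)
Your proposal is correct and follows exactly the same route the paper intends: the paper does not write out a proof of Theorem \ref{thm:4-6.8} at all, but states that the results of Section \ref{chap:8} "follow as that of Theorem \ref{thm:6.8} in Section \ref{chap:7} with slight modifications," which is precisely the translation you carry out. Your identification of the needed ingredients (the $J_2$-level analogues of Lemmas \ref{lem:2.6miao}, \ref{lem:2.50}, \ref{lem:2.64}, \ref{lem:2.72} and Propositions \ref{prop:6.13}, \ref{prop:6.27}, \ref{prop:6.53}, \ref{prop:6.74}, with $(*_2)$ replacing $(*_1)$ and $\MM_2(v_1,w_1),\MM_2(w_1,v_1)$ replacing $\MM_1(v_0,w_0),\MM_1(w_0,v_0)$) and your four-part verification (A)--(D) match the paper's intended argument.
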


\begin{rmk}
The results of Sections \ref{sec4:4.1}-\ref{sec4:4.3} can be easily generalized as above.
\end{rmk}
\begin{rmk}
More multitransition solutions in higher dimension can be obtained under more gap conditions, cf. \cite[Section 5.1]{LC}.
\end{rmk}

\subsection{Multitransition solutions in other coordinate systems.}\label{sec4:4.5}
\

In \cite[Section 5.2]{LC}, we know that changing coordinate system will not produce more periodic solutions, but it will give more heteroclinic solutions.
So multitransition solutions as in Theorem \ref{thm:6.8} and Sections \ref{sec4:4.1}-\ref{sec4:4.4}
can be obtained in the new coordinate system.
The interested reader is referred to \cite[Section 5.2]{LC} (see also \cite[Section 5.2]{RS}).

\subsection{Multitransition solutions constructed by heteroclinic solutions of rotation vector $\alpha\in\Q\setminus \{\BD{0}\}$.}\label{sec4:4.6}
\

If we replace the
rotation vector $\alpha=\BD{0}$ by $\alpha\in\Q\setminus \{\BD{0}\}$, we can obtain more multitransition solutions.
Indeed, in \cite[Section 5.3]{LC}, we construct periodic solutions and basic heteroclinic solutions corresponding to rotation vector $\alpha\in\Q\setminus \{\BD{0}\}$
by transferring this problem into a problem of the form as in \cite[Section 5.2]{LC}.
Thus using the same idea,
by Section \ref{sec4:4.5} we have multitransition solutions lying between periodic solutions corresponding to $\alpha\in\Q\setminus \{\BD{0}\}$,
and multitransition solutions lying between basic heteroclinic solutions corresponding to $\alpha\in\Q\setminus \{\BD{0}\}$.

\subsection*{Acknowledgments}
Wen-Long Li is supported by the Fundamental Research Funds for the Central Universities (no. 34000-31610274).
Xiaojun Cui is supported by the National Natural Science Foundation
of China (Grants 11571166, 11631006, 11790272), the Project Funded by
the Priority Academic Program Development of Jiangsu Higher Education
Institutions (PAPD) and the Fundamental Research Funds for the
Central Universities.


\end{CJK*}
\end{document}